\newtheorem{theorem}{Theorem}
\newtheorem{lemma}[theorem]{Lemma}
\newtheorem{proposition}[theorem]{Proposition} 
\newtheorem*{theorem*}{Theorem}
 \theoremstyle{definition}
\newtheorem{definition}[theorem]{Definition}
 \newtheorem{example}[theorem]{Example} 
\theoremstyle{remark}
\newtheorem{remark}[theorem]{Remark}
\newtheorem*{remark*}{Remark}
\numberwithin{equation}{section}
    \newcommand*{\RR}{\mathbb{R}}
    \newcommand*{\NN}{\mathbb{N}}
     \newcommand*{\ZZ}{\mathbb{Z}}
     \newcommand*{\CC}{\mathbb{C}}
    \newcommand*{\EE}{\mathbb{E}}
    \newcommand*{\PP}{\mathbb{P}}
    \newcommand*{\eps}{\varepsilon}
    \newcommand*{\ind}{\mathbf{1}}
\begin{document}

\title[MDPs \& Mod-Gaussian convergence for lacunary sums]{Moderate deviation principles and Mod-Gaussian convergence for lacunary trigonometric sums}

\author[J. Prochno]{Joscha Prochno}
\address{Faculty of Computer Science and Mathematics, University of Passau, Dr.-Hans-Kapfinger-Stra{\ss}e 30, 94032 Passau, Germany}
\email{joscha.prochno@uni-passau.de}
\thanks{}

\author[M. Strzelecka]{Marta Strzelecka}
\address{Institute of Mathematics, University of Warsaw,  Banacha 2, 02--097 Warsaw, Poland.}
\email{martast@mimuw.edu.pl}
\thanks{}

\begin{abstract}
Classical works of Kac, Salem and Zygmund, and Erd\H os and G\'al have shown that lacunary trigonometric sums despite their dependency structure behave in various ways like sums of independent and identically distributed random variables. For instance, they satisfy a central limit theorem and a law of the iterated logarithm. Those results have only recently been complemented by large deviation principles by Aistleitner, Gantert, Kabluchko, Prochno, and Ramanan, showing that interesting phenomena occur on the large deviation scale that are not visible in the classical works. This raises the question on what scale such phenomena kick in. In this paper, we provide a first step towards a resolution of this question by studying moderate deviation principles for lacunary trigonometric sums. We show that no arithmetic affects are visible between the CLT scaling $\sqrt{n}$ and a scaling $n/\log(n)$ that is only a logarithmic gap away from the large deviations scale. To obtain our results, inspired by the notion of a dependency graph, we introduce correlation graphs and use the method of cumulants. In this work we also obtain results on the mod-Gaussian convergence using different tools.\end{abstract}

\subjclass[2020]{%
Primary 42A55;  
Secondary 60F10, 11L03, 60E10. %
}

\keywords{Correlation graph, lacunary trigonometric sum, moderate deviation principle, mod-Gaussian convergence, trigonometric polynomial.
}

\maketitle

\section{Introduction \& main results}

It is today well-known in probabilistic number theory that, from an asymptotic point of view, lacunary trigonometric sums $\sum_{k=1}^n\cos(2\pi a_k x)$, $x\in [0,1]$, behave in various ways like sums of independent and identically distributed (i.i.d.) random variables if the sequence $(a_k)_{k\in\NN}$ of natural numbers is lacunary in the sense that it satisfies the so-called Hadamard gap condition
  \begin{equation}\label{eq:lacunary-condition}
    \frac{a_{k+1}}{a_k} \geq q > 1.
  \end{equation}
It should be noted that $\cos(2\pi a_k\cdot)$, $k\in\NN$, considered as random variables on the probability space $[0,1]$ endowed with the Borel $\sigma$-field and Lebesgue measure $\lambda$ are identically distributed, but \textit{merely} uncorrelated (because the $a_k$'s are distinct) and \textit{not} independent.
In some cases we also refer to the so-called large gap condition
\begin{equation}
	\frac{a_{n+1}}{a_n} \mathop{\longrightarrow}^{n\to \infty} \infty,
\label{eq:large-gap-condition}
\end{equation}
which means that we have super-exponential gaps. There are a number of classical works studying the probabilistic nature of lacunary (trigonometric) sums. For instance, Salem and Zygmund \cite{SalemZyg1950} and  Erd\H{o}s and G\'al \cite{EG1955} obtained that under the Hadamard gap condition \eqref{eq:lacunary-condition},
  \begin{equation*} 
    \limsup_{N\to\infty} \frac{\bigl| \sum_{k=1}^N \cos(2\pi n_kx) \bigr|}{\sqrt{2 N\log\log N}} = \frac{1}{\sqrt{2}} \quad \text{a.e.},
  \end{equation*}
thereby showing that lacunary trigonometric sums satisfy a Hartman--Wintner law of the iterated logarithm (LIL); for further results regarding a LIL, we refer the reader to  \cite{A2010} and \cite{AFP23}). Regarding Gaussian fluctuations, Salem and Zygmund obtained in \cite{SZ-1} that under the Hadamard gap condition \eqref{eq:lacunary-condition}, for every $t\in\mathbb R$, lacunary trigonometric sums satisfy the central limit theorem (CLT for short)
  \[
    \lim_{N\to\infty} \lambda \biggl( \biggr\{ x\in[0,1]\,:\, \frac{\sum_{k=1}^N\cos(2\pi n_kx)}{\sqrt{N/2}} \leq t \biggr\} \biggr) = \frac{1}{\sqrt{2\pi}}\int_{-\infty}^t e^{-y^2/2}\,\text{d}y,
  \]
which resembles the CLT for sums of independent random variables; a~CLT in the case of the large gap condition \eqref{eq:large-gap-condition} had previously been shown by Kac in \cite{K1939}. It is of course a natural question to ask whether the LIL and the CLT still hold under the gap condition when the function $\omega\mapsto \cos(2\pi\omega)$ is replaced by an arbitrary Lipschitz continuous, centered and $1$-periodic function $f:\RR\to\RR$. However, as a famous example of Erd\H{o}s and Fortet (see, e.g., \cite{Kac49, ABT23}) shows, this is not true in general. We refer to Kac's work \cite{KacCLT} for a central limit theorem in the setting of the lacunary sequence $a_n=2^n$  and more general centered  $1$-periodic functions.

\subsection{Recent contributions \& motivation}

Let us now go beyond the classical works. We continue with our setting and present some more recent and complementing results, some of which go beyond the trigonometric framework. 
They  serve as a motivation for our paper.  

Let $D\in\NN$ and $f\colon \RR \to \RR$ be a trigonometric polynomial of the form
\begin{equation}\label{eq:trigonometric polynomial}
	f(x) = \sum_{d=-D}^D c_d e^{2\pi idx}.
\end{equation}
We assume that the coefficients $c_d\in\RR$, $d\in\{-D,\dots,D\}$ satisfy the symmetry condition $c_d=c_{-d}$;  this is equivalent to $f$ being a real trigonometric polynomial. Moreover, we assume that $\sum_{d=-D}^D c_d^2 >0$, because otherwise $f\equiv 0$, and, without loss of generality, we let $c_0=\int_0^1f(x)\,\text{d}x=0$, which means that $f$ is centered; otherwise we may simply shift all the considered random variables by the deterministic constant $c_0$. Note also that $f$ is clearly a $1$-periodic function. Let $U\sim \operatorname{Unif}(0,1)$ and $X_j=f(a_jU)$, $j\in\NN$.
Since $f$ is $1$-periodic and  $a_j$'s are integers, every $X_j$ has the same distribution as $f(U)$.
Moreover, every $X_j$ is centered, because we assume that $c_0=0$.
 For any $n\in\NN$, we write 
\[
	S_n^f\coloneqq \sum_{j=1}^n X_j = \sum_{j=1}^n f(a_jU).
\]
This is a sum of identically distributed random variables, which are dependent. However, as we shall see once more, it sometimes behaves in a sense like a sum of i.i.d.~random variables $Y_j \coloneqq  f(U_j)$, i.e., like
	\[	
		T_n^f \coloneqq \sum_{j=1}^n Y_j =  \sum_{j=1}^n f(U_j) \stackrel{d}{=} \sum_{j=1}^n f(a_jU_j),
	\]
	where $(U_j)_{j\in\NN}$ is a sequence of  i.i.d.~random variables distributed uniformly on $[0,1]$. If $f:\RR\to\RR$ is given by $f(x)=\cos(2\pi x)$, then the variance of $S_n$ is $\frac{n}{2}$ and by the previously mentioned CLT of Salem and Zygmund \cite{SZ-1}, we know that 
  \[
    \frac{S_n}{\sqrt{n/2}} = \frac{S_n}{\sqrt{\EE S_n^2}} \stackrel{d}{\longrightarrow} \mathcal N(0,1),
  \]
i.e., we have convergence in distribution to the standard Gaussian law. However, for general trigonometric polynomials $f$ the CLT may not hold, which shows that the limiting behavior of $S_n$ depends on the analytic properties of the function $f$. For example, if $f(x)=\cos(2\pi x)-\cos(4\pi x)$ and $a_n=2^n$, then $S_n^f$ is a telescoping sum. Hence, the CLT cannot hold because of the trivial reason that the variances of $S_n^f$, $n\in\NN$, are equal to $1$, so they degenerate, i.e., they are of smaller order than in the i.i.d.~case, where they are of order $n$. As  it turns out, this is the only reason why the CLT with the Gaussian limit cannot hold in the case of the geometric progression $a_n=2^n$, $n\in\NN$. This is a consequence of the following much more general characterization via solutions to two-variable linear Diophantine equations obtained by Aistleitner and Berkes {\cite[Theorems~1.1 and 1.3]{AB2010}}. 
	
	\begin{theorem*}\label{thm:AB}
		Let $a=(a_n)_{n\in\NN}$ be a lacunary sequence. Then the following are equivalent:
		\begin{enumerate}
			\item $\bigl(\frac{S_n^f}{\sqrt{\EE S_n^2}}\bigr)_{n\in\NN}$ converges in distribution to the standard Gaussian law for every centered trigonometric polynomial $f$ satisfying $\EE (S_n^f)^2\ge c(f,a) n$ for all $n\in\NN$, where $c(f,a)\in(0,\infty)$ is a constant depending only on $f$ and the lacunary sequence $a$.
			\item For every $b,c\in \ZZ$,
			\begin{equation}	\label{eq:cond-AB}
				\sup_{d\in\ZZ\setminus \{0\}} \#\{ (k,\ell)\in \{1,\ldots,n\}^2 \colon ba_k+ca_\ell=d \} = o(n). 
			\end{equation}
		\end{enumerate}
	\end{theorem*}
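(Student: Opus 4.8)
The plan is to prove the two implications separately; the substantive one is $(2)\Rightarrow(1)$, while for $(1)\Rightarrow(2)$ I would argue by contraposition together with an explicit construction. For $(2)\Rightarrow(1)$ fix a centered real trigonometric polynomial $f(x)=\sum_{0<|d|\le D}c_d e^{2\pi i dx}$ with $\sigma_n^2\coloneqq\EE (S_n^f)^2\ge c(f,a)\,n$ and proceed via the method of cumulants: since $\mathcal N(0,1)$ is moment-determinate and has all cumulants of order $\ge 3$ equal to zero, cumulant convergence entails convergence in distribution, so it suffices to show $\kappa_r\bigl(S_n^f/\sigma_n\bigr)\to 0$ for every $r\ge 3$ (the cases $r=1,2$ being automatic from $\EE S_n^f=0$ and the definition of $\sigma_n$). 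Expanding each $X_j=f(a_jU)$ into Fourier modes and using $\EE e^{2\pi i m U}=\ind\{m=0\}$ for $m\in\ZZ$, the expression of a joint cumulant as the ``connected'' part of the moment expansion shows that $\kappa_r(S_n^f)$ is a sum over $j_1,\dots,j_r\in\{1,\dots,n\}$ and frequencies $0<|d_i|\le D$ of $c_{d_1}\cdots c_{d_r}$ times the indicator that $\sum_{i=1}^{r}d_ia_{j_i}=0$ \emph{and} this linear relation is indecomposable, i.e.\ no proper sub-sum vanishes. Hence, up to lower-order terms arising from coincidences among the $j_i$, one has $|\kappa_r(S_n^f)|\le C(D,r)\,N_r$, where $N_s$ denotes the number of indecomposable relations $\sum_{i=1}^{s}d_ia_{j_i}=0$ with distinct $j_i\in\{1,\dots,n\}$ and $0<|d_i|\le D$; and since $\sigma_n^r\ge (c(f,a)\,n)^{r/2}$, the central limit theorem follows once one proves
\begin{equation*}
  N_s=o\bigl(n^{s/2}\bigr)\qquad\text{for every integer }s\ge 3.
\end{equation*}

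I would establish this estimate by induction on $s$, peeling off the two indices $j_{s-1}<j_s$ carrying the largest frequencies. The Hadamard gap condition \eqref{eq:lacunary-condition} forces these to be close: from $|d_s|\,a_{j_s}=\bigl|\sum_{i<s}d_ia_{j_i}\bigr|\le C(D)\,a_{j_{s-1}}$ one gets $j_s-j_{s-1}\le L(D,q)$, so this pair ranges over only $O(n)$ values; removing it leaves an inhomogeneous equation $\sum_{i\le s-2}d_ia_{j_i}=-m$ whose target $m=d_{s-1}a_{j_{s-1}}+d_sa_{j_s}$ is forced to be nonzero, since $\{s-1,s\}$ is a proper sub-sum. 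One then continues to peel indices: each step either pins the current largest index down to $O(1)$ positions or isolates a close pair $(k,\ell)$, to which hypothesis \eqref{eq:cond-AB} — applied to the two-variable equation $d\,a_k+d'a_\ell=(\text{nonzero target})$ — assigns only $o(n)$ completions. Tracking the exponents through the recursion yields $N_s=o(n^{s/2})$. (For the single-frequency polynomial $f=\cos(2\pi\,\cdot\,)$ the enlarged frequency set $\{d\,a_k:0<|d|\le D\}$ is itself lacunary and the estimate reduces to the classical count of Salem and Zygmund, \eqref{eq:cond-AB} being needed only to control the non-lacunary resonances that arise for general $f$.)

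For $(1)\Rightarrow(2)$ I argue by contraposition. If \eqref{eq:cond-AB} fails there are integers $b,c$, a constant $\delta>0$, a sequence $n_m\to\infty$ and integers $d_m\neq0$ with $\#\{(k,\ell)\in\{1,\dots,n_m\}^2:\ ba_k+ca_\ell=d_m\}\ge\delta n_m$; since lacunarity alone makes this count $O_q(1)$ when $|b|=|c|$ (then one of $a_k,a_\ell$ is confined to a multiplicative range of bounded ratio), one may assume $b,c\neq 0$ with $|b|\neq|c|$. I then test the statement of (1) on the admissible polynomial $f(x)=\cos(2\pi|b|x)+\cos(2\pi|c|x)$: a short computation gives $\EE (S_n^f)^2=n+\#\{(k,\ell):\ |b|a_k=|c|a_\ell\}\ge n$, so the variance hypothesis of (1) holds, while also $\sigma_{n_m}^2=O(n_m)$. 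On the other hand, the (at least $\delta n_m$) solutions of $ba_k+ca_\ell=d_m$ pair up to give $\gtrsim\delta^2 n_m^2$ four-term relations $ba_k+ca_\ell-ba_{k'}-ca_{\ell'}=0$, of which all but $O(n_m)$ are indecomposable; since every Fourier coefficient of $f$ equals $\tfrac12$, each of these contributes the same positive amount to $\kappa_4(S_{n_m}^f)$, whence $\kappa_4(S_{n_m}^f)\ge c'\delta^2 n_m^2$ and $\kappa_4\bigl(S_{n_m}^f/\sigma_{n_m}\bigr)$ stays bounded away from $0$ along $(n_m)$. Therefore $S_{n_m}^f/\sigma_{n_m}$ cannot converge in distribution to $\mathcal N(0,1)$, and (1) fails.

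I expect the main obstacle to be the combinatorial core of the first implication: showing that \eqref{eq:cond-AB} is just strong enough to improve the easy bound $N_s=O(n^{s-2})$, coming from lacunarity and solving the relation one index at a time, to the required $N_s=o(n^{s/2})$ for \emph{every} $s$. This is exactly where the hypothesis is used to its full strength, and it is delicate precisely because the condition is only ``$o(n)$'' rather than $O(n^{1-\eps})$. Should the direct bookkeeping prove unwieldy for large $s$, a robust fallback is to first perform the Salem--Zygmund decomposition of $\{1,\dots,n\}$ into long blocks separated by short gaps: the block sums then become asymptotically independent — their frequencies lying in well-separated ranges — which reduces the counting that matters to blocks of a fixed bounded length, where the elementary two-variable estimate \eqref{eq:cond-AB} applies directly.
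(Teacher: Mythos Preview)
The paper does not prove this theorem at all: it is quoted as a known result of Aistleitner and Berkes \cite[Theorems~1.1 and 1.3]{AB2010}, used only as background and motivation for the moderate-deviation results that follow. There is therefore no ``paper's own proof'' to compare your proposal against.

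On the merits of your sketch, two points deserve attention. First, in your cumulant formula for $(2)\Rightarrow(1)$ you assert that $\kappa_r(S_n^f)$ is, up to lower-order terms from coinciding indices, a sum over \emph{indecomposable} relations $\sum_i d_i a_{j_i}=0$ with weight $c_{d_1}\cdots c_{d_r}$. That is correct for the indecomposable relations, but decomposable relations with pairwise distinct $j_i$'s also occur and contribute with M\"obius-type signs $\sum_\pi(-1)^{|\pi|-1}(|\pi|-1)!\prod_{B\in\pi}\ind\{\sum_{i\in B}d_ia_{j_i}=0\}$; you have not argued that these are of lower order, and they are not covered by your phrase ``coincidences among the $j_i$''. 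Second, the inductive peeling you outline for $N_s=o(n^{s/2})$ is not self-evidently closing: after removing the top close pair $(j_{s-1},j_s)$ at cost $O(n)$, the remaining $(s-2)$-variable equation is \emph{inhomogeneous}, and lacunarity no longer forces the next largest index $j_{s-3}$ to be close to $j_{s-2}$ (the nonzero target $m$ can dominate). For $s=5$, say, your scheme as written seems to give $O(n)\cdot n\cdot o(n)=o(n^3)$ rather than $o(n^{5/2})$. You anticipate this difficulty and propose the Salem--Zygmund blocking fallback; that is indeed closer to how the argument is usually run, and is essentially the route taken in \cite{AB2010}.

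For $(1)\Rightarrow(2)$, your idea of testing with $f(x)=\cos(2\pi|b|x)+\cos(2\pi|c|x)$ is the right one, but the claim that each indecomposable four-term relation ``contributes the same positive amount to $\kappa_4$'' is not justified: the decomposable four-term relations contribute as well (with signs), and you must show they do not cancel the main term of order $\delta^2 n_m^2$. You also implicitly use that $\kappa_4(S_{n_m}^f/\sigma_{n_m})\not\to 0$ obstructs the CLT, which requires convergence of fourth moments along the CLT; this follows from uniform boundedness of higher moments of $S_n^f/\sigma_n$ under lacunarity, but is an extra step worth stating.
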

	
	Let us note that there are examples of lacunary sequences violating condition \eqref{eq:cond-AB}, but for which $\bigl(\frac{S_n}{\sqrt{\EE S_n^2}}\bigr)_{n\in\NN}$ still converges, but then the limiting distribution is of course different from the standard Gaussian law (cf. \cite[Section~4]{ABT23}). 
	Other directions of generalizing CLT results for trigonometric lacunary sums were investigated, for instance, in \cite{Ber79, BG07}.
	
	The theorem of Aistleitner and Berkes already reveals the impact of the algebraic 
	and arithmetic properties of the lacunary sequence $(a_n)_{n\in\NN}$ on the limiting behavior of $S_n$ in the CLT scaling. 
	As a matter of fact, it was quite surprising to see that the limiting behavior of $S_n$ in the large deviation principle (LDP for short) regime already in the cosine setting is more sensitive even for minor changes in the sequence $(a_n)_{n\in\NN}$, as is revealed by theorem below, which was only recently obtained by Aistleitner, Gantert, Kabluchko, Prochno, and Ramanan \cite{agkpr2020}, and by Fr{\"u}hwirth, Juhos, and Prochno \cite{FJP};\footnote{This theorem was first proven in \cite{agkpr2020} in the classical case when $f$ is the cosine, and then generalized to the case of general $1$-periodic Lipschitz functions in \cite{FJP}.} we postpone a formal definition of an LDP until later when we present our main results.
	
	\begin{theorem*}
	\label{thm:AGKPR} 
		\begin{enumerate} 
			\item Assume that $(a_n)_{n\in\NN}$ satisfies the large gap condition \eqref{eq:large-gap-condition} and $f$ is a  $1$-periodic Lipschitz function. Then $(\frac{S_n^f}n)_{n\in\NN}$ satisfies an LDP with speed $n$ and a good rate function $I_f(t)=\Lambda_{X_1}^*(t)$.
			\item Assume that $q\in \NN\setminus\{1\}$, $a_n=q^n$, and  $f$ is a  $1$-periodic Lipschitz function. Then the  sequence of functions $(\frac 1n \log \EE e^{zS_n^f})_{n\in\NN}$ converges locally uniformly on $\CC$ to a function $J_{q,f}$ depending only on $q$ and $f$.
			Moreover, $(\frac{S_n^f}n)_{n\in\NN}$ satisfies an LDP with speed $n$ and a good rate function $I_{q,f}(t)=J_{q,f}^*(t)$.
			\item If $f(x)=\cos(2\pi x)$, then $I_{p,f}\neq I_{q,f}\neq \Lambda^*_{X_1}$ for every pair of distinct $p,q\in \NN\setminus \{1\}$.
			\item If $f(x)=\cos(2\pi x)$, then there exists a random sequence of integers $(a_n)_{n\in\NN}$ satisfying the Hadamard gap condition \eqref{eq:lacunary-condition} a.s. as well as the inequality $|a_n-2^n|\leq 2^{n^{2/3}}$ a.s., and such that $(\frac{S_n}n)_{n\in\NN}$ satisfies an LDP with speed $n$ and a good rate function $I(t)=\Lambda_{X_1}^*(t)$.
		\end{enumerate}
	\end{theorem*}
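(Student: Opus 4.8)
The plan is to route all four parts through the Gärtner--Ellis theorem, so that the real work in each case is to identify the limiting logarithmic moment generating function (log-mgf) of $S_n^f/n$ and to check that it is finite and differentiable (hence essentially smooth) on $\RR$. For part (1), under the large gap condition I would first take $f$ to be a trigonometric polynomial and write $\EE e^{tS_n^f}=\int_0^1\prod_{j=1}^n g(a_jx)\,dx$ with $g=e^{tf}$, whose Fourier coefficients decay rapidly. The classical Fourier-analytic ``almost independence'' estimate for super-lacunary sequences (going back to Kac) shows that such an integral differs from $\prod_{j=1}^n\widehat g(0)=(\EE e^{tf(U)})^n$ only through frequency resonances $\sum_jd_ja_j=0$ which, because $a_{j+1}/a_j\to\infty$, are supported on a bounded initial block of indices and hence contribute a subexponential factor; thus $\frac1n\log\EE e^{tS_n^f}\to\log\EE e^{tf(U)}=\Lambda_{X_1}(t)$ for every $t\in\RR$, and Gärtner--Ellis gives the LDP with rate $\Lambda_{X_1}^*$. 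For a general Lipschitz $f$ one approximates it uniformly by trigonometric polynomials $f_D$ (e.g.\ Fejér means); then $\tfrac1n|S_n^f-S_n^{f_D}|\le\|f-f_D\|_\infty$, so $(S_n^{f_D}/n)_D$ is an exponentially good approximation of $(S_n^f/n)$ and the LDP transfers by the standard approximation lemma, the rate functions converging to $\Lambda_{X_1}^*$.

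For part (2), with $a_n=q^n$ I would invoke thermodynamic formalism for the expanding circle map $T\colon x\mapsto qx\bmod 1$. Using the $1$-periodicity of $f$, a change of variables rewrites the mgf as a Birkhoff partition function,
\[
\EE e^{zS_n^f}=\int_0^1\exp\!\Big(z\sum_{i=0}^{n-1}f(q^ix)\Big)dx=\int_0^1\mathcal L_{zf}^{\,n}[\ind]\,d\lambda,
\]
where $\mathcal L_\psi h(x)=\sum_{k=0}^{q-1}e^{\psi((x+k)/q)}h((x+k)/q)$ is the Ruelle transfer operator. On a suitable function space (real-analytic functions on a complex neighbourhood of $[0,1]$ when $f$ is a trigonometric polynomial, a Lipschitz or bounded-variation space otherwise) $\mathcal L_{zf}$ is quasi-compact with a simple isolated leading eigenvalue $\varrho(z)$, so $\mathcal L_{zf}^{\,n}[\ind]=\varrho(z)^n(\Pi_z[\ind]+O(\theta^n))$ with $\theta<1$ and hence $\frac1n\log\EE e^{zS_n^f}\to\log\varrho(z)=:J_{q,f}(z)$. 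Kato's analytic perturbation theory makes $z\mapsto\varrho(z)$, hence $J_{q,f}$, holomorphic, and on $\RR$ it is the logarithm of the pressure of $zf$, so it is convex --- strictly convex unless $f$ is a measurable coboundary for $T$, a degeneracy excluded by the non-degeneracy assumption on $\EE(S_n^f)^2$. A Vitali/Montel argument (the $\EE e^{zS_n^f}$ being entire in $z$ and, by a Hurwitz-type argument, zero-free on compacta for large $n$) upgrades pointwise convergence on $\RR$ to locally uniform convergence on $\CC$. Finally $J_{q,f}$ is finite and differentiable on $\RR$, so Gärtner--Ellis yields the LDP with good rate function $I_{q,f}=J_{q,f}^*$.

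For part (3), by Legendre duality it suffices to separate the log-mgf's $J_{p,\cos}$, $J_{q,\cos}$ and $\Lambda_{X_1}(t)=\log I_0(t)$ ($I_0$ the modified Bessel function), and I would do so through their Taylor coefficients at $0$, i.e.\ the normalised cumulants $\kappa_m^{(q)}:=\lim_n\kappa_m(S_n^{\cos})/n$. Expanding moments with $\EE\prod_{i=1}^m\cos(2\pi a_{k_i}U)=2^{-m}\#\{\eps\in\{\pm1\}^m:\sum_i\eps_ia_{k_i}=0\}$, the cumulant expansion (the ``correlation graph'' bookkeeping of the paper) shows that $\kappa_m^{(q)}$ is assembled from the sign-resonances $\sum_i\eps_iq^{k_i}=0$ that occur $\Theta(n)$ often. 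After grouping equal indices, a nontrivial base-$q$ relation has total weight at least $q+1$, the minimum being realised by $\underbrace{q^j+\dots+q^j}_{q}=q^{j+1}$; hence $\kappa_m^{(q)}=\kappa_m(Y_1)$ for all $m\le q$, while at order $m=q+1$ this extra resonance kicks in, so $\kappa_{q+1}^{(q)}\ne\kappa_{q+1}(Y_1)$ --- provided the minimal-resonance contributions to $\kappa_{q+1}$ do not cancel, which is the one point requiring a short explicit computation. Consequently, for $p<q$ the coefficients of $J_{p,\cos}$ and $J_{q,\cos}$ agree up to order $p$ but differ at order $p+1$, and each differs from that of $\Lambda_{X_1}$; thus $I_{p,\cos}\ne I_{q,\cos}\ne\Lambda_{X_1}^*$.

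For part (4) I would use a randomised construction: set $a_n=2^n+\xi_n$ with $\xi_1,\xi_2,\dots$ independent, $\xi_n$ uniform on $\{0,1,\dots,\lfloor 2^{n^{2/3}}\rfloor-1\}$; then $a_{n+1}/a_n\to2$ (so the Hadamard condition holds a.s., adjusting finitely many terms if needed) and $|a_n-2^n|\le 2^{n^{2/3}}$ surely. For a fixed realisation, the goal is again $\frac1n\log\EE_Ue^{tS_n}\to\log I_0(t)$ for every $t\in\RR$. The Jacobi--Anger expansion gives $\EE_Ue^{tS_n}=I_0(t)^n+\sum\prod_jI_{r_j}(t)$, summed over nontrivial integer vectors $(r_j)$ with $\sum_jr_ja_j=0$; here $I_0(t)^n$ is exactly the i.i.d.\ mgf, while $|I_r(t)/I_0(t)|\le\gamma(t)<1$ for $r\ne0$, so a resonance with $s$ nonzero coordinates contributes at most $\gamma(t)^sI_0(t)^n$. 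A resonance with largest active index $j$ pins one coordinate $\xi_j$ to a prescribed value given the others, hence occurs with probability $\lesssim 2^{-j^{2/3}}$; a first-moment estimate --- exploiting that the super-polynomial decay $2^{-j^{2/3}}$ beats the polynomial-in-weight count of low-weight resonances while $\gamma(t)^s$ suppresses the high-weight ones --- together with Borel--Cantelli yields that almost surely the whole remainder is $o(I_0(t)^n)$, so $\frac1n\log\EE_Ue^{tS_n}\to\log I_0(t)$ and Gärtner--Ellis produces the LDP with rate $\Lambda_{X_1}^*$. I expect this last step --- making the first-moment/Borel--Cantelli bound uniform over resonances of all weights and orders, so as to control the full log-mgf rather than finitely many cumulants --- to be the main obstacle, with the transfer-operator analyticity of part (2) the second most delicate point.
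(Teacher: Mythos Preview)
The paper does not prove this theorem. It is quoted in the introduction as a result from \cite{agkpr2020} (for $f=\cos$) and \cite{FJP} (for general $1$-periodic Lipschitz $f$), purely to motivate the questions (Q1) and (Q2) that the paper then addresses via moderate deviations. There is therefore no ``paper's own proof'' to compare your proposal against.

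That said, the paper does give one hint about the argument in \cite{agkpr2020} for part~(1): in the proof of Theorem~\ref{prop:Mod-Gaussian-conditions-1-2} under \eqref{condition2} it explicitly says it is repeating, with minor changes, the proof of \cite[Theorem~A]{agkpr2020}, and what it writes there is a direct coupling estimate
\[
\|(S_n-S_{k_0})-(T_n-T_{k_0})\|_\infty \le L\sum_{k>k_0}\frac{a_k}{a_{k+1}},
\]
obtained from the Lipschitz constant of $x\mapsto f(a_kx)$, together with the i.i.d.\ computation for $T_n$. This is more elementary than the Fourier resonance bookkeeping you outline, and it is what makes the extension to arbitrary Lipschitz $f$ immediate rather than requiring an approximation step by trigonometric polynomials. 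Your transfer-operator plan for part~(2), cumulant-separation plan for part~(3), and randomised Borel--Cantelli plan for part~(4) are all plausible and in the spirit of \cite{agkpr2020}, but since the present paper neither states nor sketches those arguments, any detailed comparison would have to be made against \cite{agkpr2020} and \cite{FJP} directly, not against this paper.
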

	
	This theorem shows that the limiting behavior of $(\frac{S_n^f}n)_{n\in\NN}$ may change
	drastically for small random perturbations of a lacunary sequence, and is then different than the behavior of the rescaled sum of i.i.d.~random variables $\frac{T_n^f}n$, $n\in\NN$, even for very nice geometric sequences $(a_n)_{n\in\NN}$. 
	
Let us pose two natural questions: 
	\begin{enumerate}[(Q1)]
		\item \label{item:Q1}
		Assume that $f(x)=\cos(2\pi x)$. Does there exist a threshold for scaling $y_n$, below which the limiting behavior of $(\frac{S_n}{y_n})_{n\in\NN}$ remains the same as in the i.i.d.~case for every geometric lacunary sequence and for every sequence satisfying the large gap condition, and above of which this limiting behavior varies even for different geometric lacunary sequences?
		\item  \label{item:Q2}
		Assume that $f$ is a $1$-periodic function, and $a=(a_n)_{n\in\NN}$ is such a lacunary sequence satisfying \eqref{eq:cond-AB} that the variances of $S_n^f$ do not degenerate, i.e., they satisfy 
		\begin{equation} \label{condition-var}
		\EE (S_n^f)^2 \ge c(f,a) n  
	\end{equation}
	for every $n\in\NN$ and some constant $c(f,a)\in(0,\infty)$ only depending on $f$ and $a$, but not on $n$.
		What is the limiting behavior of $(\frac{S_n^f}{y_n})_{n\in\NN}$ when $y_n>0$ converges to infinity fast enough to ensure that $\lim_{n\to \infty}    \frac{y_n}{  \sqrt n  }  =\infty $ (scale of larger order than the one in the CLT), but slow enough to ensure that $\lim_{n\to \infty}   \frac{y_n}n=  0 $ (scale of smaller order than the one in the LDP)? 
	\end{enumerate}
	
\subsection{Main results}

	 The first main contribution of this article are the following two moderate deviation principle (MDP for short) results obtained in the case when all ratios $\frac{a_{n+1}}{a_n}$, $n\in\NN$, are natural numbers or when $(a_n)_{n\in\NN}$ satisfies the large gap condition \eqref{eq:large-gap-condition}; note that the case of integer ratios includes the  class of geometric progressions $a_n= q^n$ with  ratio $q\in\{2,3,\dots\}$. 
	 These results provide partial answer to questions (Q1) and (Q2) above, a fact on which we shall elaborate further after stating the results. 
	 
	 Let us briefly recall that we say that the sequence of real random variables $(Z_n)_{n\in\NN}$ satisfies an LDP with speed $x_n$ and a good rate function $I\colon \RR\to [0,\infty]$ if and only if $\lim_{n\to \infty}x_n= \infty$, $x_n>0$, all the sublevel sets of $I$  are compact, and for every Borel measurable set $B$ in $\RR$, 
	\[
		-\inf_{t\in \operatorname{int}B} I(t) \le \liminf_{n\to \infty} \frac1{x_n} \log\PP(Z_n\in B)  \le \limsup_{n\to \infty} \frac1{x_n} \log\PP(Z_n\in B) \le -\inf_{t\in \overline{B}} I(t),
	\]
where $\operatorname{int}(B)$ denotes the interior of the set $B$ and $\overline{B}$ the closure of $B$.
	If $T_n=Y_1+\ldots+Y_n$ for i.i.d.~random variables $Y_1, Y_2, \ldots$, then by the classical Cram{\'e}r's theorem \cite{C1938}, $(\frac{T_n}n)_{n\in\NN}$ satisfies an LDP with speed $n$ and the good rate function $I$ being the Legendre-Fenchel transform of the cumulant generating function of $Y_1$, i.e., $I(t)=\Lambda_{Y_1}^*(t)=\sup_{s\in \RR} \{st - \Lambda_{Y_1}(s)\}$, where $\Lambda_{Y_1}(s)=\log \EE e^{sY_1}$.
	
	In the case when $Z_n = \frac{S_n}{y_n}$, where $S_n$ is a sum of $n$ random variables and $y_n$ converges to $\infty$ slower than $n$, but faster than $\sqrt n$, we rather say that $(\frac{S_n}{y_n})_{n\in\NN}$ satisfies a moderate deviation principle (MDP for short) --- in this case we reserve the term LDP for the limiting behavior of $ \frac{S_n}{n}$. In other words, an MDP corresponds to the scaling between the one of a CLT (where the scaling is $\sqrt n$) and the one of an LDP (where the scaling is $n$). 
	
	Our first main result is the following MDP. 
	 
	\begin{theorem}	\label{thm:integer-ratios}
 	Assume that $(z_n)_{n\in\NN}$ is a sequence of positive numbers converging to $0$ slow enough to ensure that 
	\[
		z_n n  \  \mathop{\longrightarrow}^{n\to \infty} \infty,
	\]
	but fast enough to ensure that
	\[
		z_n (\log n)^2  \  \mathop{\longrightarrow}^{n\to \infty} 0.
	\]
	Let $x_n=z_n\EE (S_n^f)^2$ and $y_n=\sqrt{z_n}\,\EE (S_n^f)^2$. 
	Assume that $\frac{a_{n+1}}{a_n}\in \{2, 3,\ldots\}$ for every $n\in\NN$,
	and  $f\not\equiv 0$ is an even trigonometric  polynomial with $\int_0^1 f =0$. 
	If condition \eqref{condition-var} is satisfied,
	 then $(\frac{S_n^f}{y_n})_{n\in\NN}$ satisfies an MDP with speed $x_n$ and good rate function $I(t)=\frac{t^2}{2}$.
	\end{theorem}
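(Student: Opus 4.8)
The plan is to run the method of cumulants. Since $x_n/y_n=\sqrt{z_n}$, the G\"artner--Ellis theorem reduces the asserted MDP to showing that
\[
  \Lambda(\lambda)\ \coloneqq\ \lim_{n\to\infty}\frac1{x_n}\log\EE\exp\!\bigl(\sqrt{z_n}\,\lambda\,S_n^f\bigr)\ =\ \frac{\lambda^2}{2}\qquad\text{for every }\lambda\in\RR .
\]
The map $\lambda\mapsto\lambda^2/2$ is finite and smooth on $\RR$, so G\"artner--Ellis then yields the full large deviation principle for $(S_n^f/y_n)_n$ at speed $x_n$ with good rate function $I(t)=(\lambda^2/2)^{*}(t)=t^2/2$. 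Here $x_n=z_n\EE(S_n^f)^2\ge c(f,a)\,z_n n\to\infty$ by \eqref{condition-var}, so $x_n$ is a legitimate speed, and $\EE(S_n^f)^2\asymp n$ --- the upper bound being automatic because for $i<k$ one has $\EE[X_iX_k]\neq0$ only when $a_k/a_i\le D$, which by the integer-ratio hypothesis forces $k-i\le\log_2D$, so the covariance matrix of $(X_1,\dots,X_n)$ is banded --- whence $y_n$ indeed lies between the CLT scaling $\sqrt n$ and the LDP scaling $n$.

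To establish the displayed limit, expand the logarithmic moment generating function of $S_n^f$ in cumulants: writing $\sigma_n^2=\EE(S_n^f)^2$ and using $\EE S_n^f=0$,
\[
  \log\EE e^{tS_n^f}\ =\ \frac{t^2}{2}\,\sigma_n^2+\sum_{j\ge3}\frac{t^j}{j!}\,\kappa_j(S_n^f),
\]
valid for $|t|$ below the radius of convergence of the series. Setting $t=\sqrt{z_n}\,\lambda$ and dividing by $x_n=z_n\sigma_n^2$, the quadratic term equals $\lambda^2/2$ exactly, so it remains to show that $R_n(\lambda)\coloneqq\sum_{j\ge3}\frac{\lambda^j z_n^{(j-2)/2}}{j!\,\sigma_n^2}\,\kappa_j(S_n^f)\to0$. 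Both points will follow from a cumulant bound of the shape
\[
  |\kappa_j(S_n^f)|\ \le\ C_1\,C_2^{\,j}\,j!\,\sigma_n^2\,(\log n)^{\,j-2}\qquad(j\ge2),
\]
with $C_1,C_2$ depending only on $f$ and $(a_n)_n$: such a bound makes the radius of convergence above at least $(C_2\log n)^{-1}$, which exceeds $\sqrt{z_n}\,|\lambda|$ for large $n$ precisely because $z_n(\log n)^2\to0$ (equivalently $\sqrt{z_n}\log n\to0$), so the expansion is legitimate at $t=\sqrt{z_n}\lambda$, and moreover
\[
  |R_n(\lambda)|\ \le\ C_1\sum_{m\ge1}\bigl(C_2|\lambda|\bigr)^{m+2}\bigl(\sqrt{z_n}\,\log n\bigr)^{m}\ \xrightarrow[n\to\infty]{}\ 0 .
\]
(Equivalently, one may feed this cumulant bound into a Saulis--Statulevi\v{c}ius / D\"oring--Eichelsbacher cumulant-to-MDP lemma, which performs the truncation internally; the hypothesis $z_n n\to\infty$ is exactly what keeps the scaling above the CLT level.)

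Everything thus reduces to the cumulant estimate, and this is the heart of the matter. By multilinearity of joint cumulants, $\kappa_j(S_n^f)=\sum_{i_1,\dots,i_j=1}^{n}\kappa(X_{i_1},\dots,X_{i_j})$, and, expanding each $X_i=f(a_iU)$ into its at most $2D$ pure frequencies, every summand is a combination --- with at most $(2D)^j$ terms and coefficients of modulus at most $(\max_d|c_d|)^j$ --- of joint cumulants $\kappa\bigl(e^{2\pi i m_1U},\dots,e^{2\pi i m_jU}\bigr)$ with $m_k=d_ka_{i_k}$, $1\le|d_k|\le D$. By the moment--cumulant (Leonov--Shiryaev) formula such a cumulant vanishes unless the frequency family is \emph{indecomposable}, i.e.\ unless the index multiset $\{i_1,\dots,i_j\}$ is ``connected'' with respect to the vanishing Diophantine relations $\sum_{k\in B}d_ka_{i_k}=0$; the correlation graph --- built around the boundedly many correlated pairs $i,i'$ with $\EE[X_iX_{i'}]\neq0$ --- is the device that organises this connectivity. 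The integer-ratio condition $a_{k+1}/a_k\ge2$ enters twice: it bounds the spread of a connected configuration --- in a minimal relation $\sum d_ka_{i_k}=0$ each consecutive index-gap is controlled because $a_{i_{s+1}}$ must divide a nonzero integer of size at most $O(jD)\cdot a_{i_s}$ --- and, through this, it is used to count the contributing $j$-tuples. Combining the count with the elementary bound $|\kappa(e^{2\pi i m_1U},\dots,e^{2\pi i m_jU})|\le C^j j!$ (from the moment--cumulant formula and $|e^{2\pi i mU}|\equiv1$) then yields the desired estimate. The main obstacle --- the point where the correlation-graph bookkeeping and the arithmetic of integer-ratio lacunary sequences genuinely have to be exploited --- is to make the combinatorial count sharp enough that in the final bound the $j$-dependence stays at the harmless order $C^j j!$ while the $n$-dependence enters only through the factor $(\log n)^{j-2}$; a crude count produces an extra factorial (or an extra $\log\log n$ factor per cumulant order), which would only prove the MDP up to a scaling a power of $n$ below the LDP scale rather than up to the optimal logarithmic gap. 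I expect this estimate to constitute essentially the whole difficulty; granting it, the reduction above is routine.
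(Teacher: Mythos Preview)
Your overall strategy---reduce the MDP to a cumulant bound via cumulants and then establish that bound using the arithmetic of integer-ratio lacunary sequences---is exactly the paper's approach, and your G\"artner--Ellis reduction is correct (the paper instead routes through a Saulis--Statulevi\v{c}ius tail lemma, your ``equivalently'' aside; either works). The cumulant bound you isolate is indeed the whole point.

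Two corrections to your sketch of that bound, however. First, the paper obtains
\[
  |\kappa_m(S_n^f)|\le 2(2m)^{m-2}\,n\,(2\log(mD)+3)^{m-1}A^m,
\]
i.e.\ the logarithm is $\log m$, not $\log n$. For the application this is immaterial (the cumulant orders that matter are polynomial in $n$), but it reflects the mechanism: one builds, for each order $M$, a separate graph. Second---and this is the substantive point---the graph you describe, ``built around the boundedly many correlated pairs $i,i'$ with $\EE[X_iX_{i'}]\neq0$,'' has bounded degree and encodes only pairwise factorisation of mixed moments. That is \emph{not} enough: vanishing pairwise correlation does not force the higher-order products to factor, so such a graph does not control $\kappa_m$ for $m\ge3$. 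The paper introduces a \emph{correlation graph of range $M$} (Definition~3): a graph in which any two vertex sets with no connecting edge have mixed moments factoring up to total degree $M$. For the integer-ratio sequence the paper shows (Proposition~5) that the graph with edges $i\sim j\Leftrightarrow|i-j|\le\lceil\log_2(MD)\rceil$ is a correlation graph of range $M$; the proof is precisely the divisibility argument you describe (if a block of indices is separated from the rest by a gap exceeding $\log_2(MD)$, the partial sum over that block is a nonzero multiple of its smallest $a$-value and hence too large to be cancelled by the remaining terms). The degree of this graph is $\sim\log M$, growing with $M$, which is why the bound carries the factor $(\log m)^{m-1}$. Once this graph is in hand, the dependency-graph cumulant estimate of F\'eray--M\'eliot--Nikeghbali applies verbatim (Theorem~4), since its proof uses only the factorisation of mixed moments, never actual independence. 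Your ``$a_{i_{s+1}}$ must divide a nonzero integer of size $O(jD)\cdot a_{i_s}$'' sentence already contains the $j$-dependent gap, so you are close; the point is that this $j$-dependence must be built into the graph itself, not recovered afterwards from a bounded-degree pairwise graph.
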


\begin{remark*}
Because the case of trigonometric lacunary sums is of special importance, we elaborate on the case $f(x)=\cos(2\pi x)$. In this case $\EE(S_n^f)^2=\EE S_n^2=\frac{n}{2}$ (see Example~\ref{ex:cosine} for a calculation), i.e., condition \eqref{condition-var} is satisfied,
so Theorem~\ref{thm:integer-ratios} applies and thus $(\frac{S_n}{y_n})_{n\in\NN}$ satisfies an MDP for every scaling sequence $y_n$ such that
  \[
    y_n=\omega\big(\sqrt{n}\big) \qquad\text{and}\qquad y_n=o\Big(\frac{n}{\log(n)}\Big).
  \]
  (For the speed we obtain $x_n=\omega(1)$ and $x_n=o\bigl(\frac{n}{\log^2(n)} \bigr)$, respectively.)
Hence, we are beyond a CLT scaling and may be logarithmically close to the one of an LDP. The result therefore shows that in this range no arithmetic affects are visible  if the ratios $\frac{a_{n+1}}{a_n}$ are integer (in contrast to the affects visible even for geometric progressions in the large deviations scale, as the aforementioned theorem due to \cite{agkpr2020, FJP} shows).
\end{remark*}

	Our second main result provides an MDP in the case of lacunary sequences with large gaps.
	
	\begin{theorem}	\label{thm:MDP-large-gap}
		Assume that $(z_n)_{n\in\NN}$ is a sequence of positive numbers converging to $0$ slow enough to ensure that 
		
	\[ 
		z_n n  \  \mathop{\longrightarrow}^{n\to \infty} \infty.
	\]
		Let $(a_n)_{n\in\NN}$ satisfy the large gap condition \eqref{eq:large-gap-condition}, $x_n=nz_n\EE X_1^2$, and $y_n=n\sqrt{z_n}\EE X_1^2$. Then $ \EE (S_n^f)^2=n\EE X_1^2 + O(1)$ and $(\frac{S_n^f}{y_n})_{n\in\NN}$ satisfies an MDP with speed $x_n$ and good rate function $I(t)=\frac{t^2}{2}$.
	\end{theorem}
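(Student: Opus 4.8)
The plan is to establish the MDP via the Gärtner--Ellis theorem, which reduces the claim to computing the limit of a rescaled cumulant generating function, and to perform that computation by the method of cumulants, using the super-exponential gaps to kill all cumulants of order at least three.

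\textbf{Step 1: reduction to a cumulant generating function limit.} Put $\Lambda_n(\lambda):=\frac1{x_n}\log\EE\exp\!\big(\lambda x_n S_n^f/y_n\big)$. Since $x_n/y_n=\sqrt{z_n}$ and $\sqrt{z_n}\,y_n=x_n$, this equals $\frac1{x_n}\log\EE\exp(\lambda\sqrt{z_n}\,S_n^f)$. It suffices to show $\Lambda_n(\lambda)\to\lambda^2/2$ for every $\lambda\in\RR$: then the limit $\Lambda(\lambda):=\lambda^2/2$ is finite and smooth on all of $\RR$, so the Gärtner--Ellis theorem yields the full LDP for $(S_n^f/y_n)_n$ with speed $x_n$ and good rate function $\Lambda^*(t)=\sup_\lambda(\lambda t-\lambda^2/2)=t^2/2$, which is precisely the asserted MDP. (The exponential tightness needed for the closed-set bound follows from the exponential moment estimates obtained below together with Markov's inequality, again using $\sqrt{z_n}\,y_n=x_n$.) This step is routine.

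\textbf{Step 2: the two ingredients.} Using $\EE S_n^f=0$, write $\log\EE e^{tS_n^f}=\sum_{k\ge2}\frac{t^k}{k!}\,\kappa_k(S_n^f)$, where $\kappa_k$ is the $k$-th cumulant, and set $t=\lambda\sqrt{z_n}$. For $k=2$ one needs the variance asymptotics $\EE (S_n^f)^2=n\EE X_1^2+O(1)$ already stated in the theorem; with $f(x)=\sum_{0<|d|\le D}c_d e^{2\pi i dx}$ this holds because $\EE[X_iX_j]=\sum_{d,e}c_dc_e\,\ind[da_i+ea_j=0]$ vanishes as soon as $a_j/a_i>D$, and under \eqref{eq:large-gap-condition} this inequality fails for only finitely many pairs $(i,j)$, each contributing a bounded amount. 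Hence the $k=2$ term equals $\frac{\lambda^2}{2}x_n+O(z_n)=\frac{\lambda^2}{2}x_n+o(x_n)$, which supplies the main term after dividing by $x_n$. For $k\ge3$ the target is the uniform bound
\[
|\kappa_k(S_n^f)|\ \le\ n\,k!\,c^{-k}\qquad (k\ge 3)
\]
for some $c=c(f,a)>0$ independent of $n$ and $k$. Granting it, Stirling's formula gives $\sum_{k\ge3}\frac{t^k}{k!}|\kappa_k(S_n^f)|\le n\sum_{k\ge3}(t/c)^k=n\cdot O(z_n^{3/2})=o(nz_n)=o(x_n)$ once $z_n$ is small enough that $t/c<\tfrac12$. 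Combining with the $k=2$ term gives $\Lambda_n(\lambda)\to\lambda^2/2$, completing the proof modulo the displayed cumulant bound.

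\textbf{Step 3: the cumulant bound via correlation graphs --- the main obstacle.} To prove the displayed estimate, expand $\kappa_k(S_n^f)=\sum_{i_1,\dots,i_k\in\{1,\dots,n\}}\kappa(X_{i_1},\dots,X_{i_k})$ into joint cumulants of the bounded variables $X_j=f(a_jU)$. One then introduces the correlation graph on the vertex set $\{1,\dots,n\}$ recording which indices are linked through solvable Diophantine relations among the Fourier frequencies $\{d a_j:0<|d|\le D\}$ of $f$; the large gap condition \eqref{eq:large-gap-condition} forces this graph to have a uniformly controlled structure (only finitely many components, all of bounded size, together with isolated vertices). Feeding this into the general cumulant estimate for sums of bounded variables carried by such a graph --- in the spirit of the classical dependency-graph bounds for cumulants --- yields $|\kappa_k(S_n^f)|\le n\,(Ck)^{k}$, which is of the required form. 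I expect this to be the technical heart of the argument: unlike a genuine dependency graph, non-adjacent blocks of indices are here only \emph{approximately} independent, so the combinatorial bound must absorb the residual correlations, and it is at this point that the super-exponential growth of $(a_n)_{n\in\NN}$ is used. With the cumulant bound in hand, Steps~1--2 finish the proof.
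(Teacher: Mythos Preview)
Your Steps~1--2 are fine and match the paper's reduction via G\"artner--Ellis. The gap is in Step~3: the uniform cumulant bound $|\kappa_k(S_n^f)|\le n\,k!\,c^{-k}$ with $c$ independent of $k$ is not established, and under the bare large gap condition it is generally false. The correlation graph you describe must be of range at least $k$ to bound $\kappa_k$. For the factorization \eqref{eq:uncorr-condition} to hold at range $k$ one needs, for indices beyond some threshold $k_0(k)$, that no non-trivial combination $\sum_r \tilde d_r a_{j_r}=0$ with $|\tilde d_r|\le kD$ exists; this forces $a_{n+1}/a_n>2kD$ for $n\ge k_0(k)$. Since \eqref{eq:large-gap-condition} says only that $a_{n+1}/a_n\to\infty$ with no rate, $k_0(k)$ can grow arbitrarily fast in $k$. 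The natural correlation graph of range $k$ is then the complete graph on $\{1,\dots,k_0(k)\}$ together with isolated vertices, so Theorem~\ref{thm:corr-graphs} yields only
\[
|\kappa_k(S_n^f)|\ \le\ 2(2k)^{k-2}\,n\,k_0(k)^{\,k-1}A^k,
\]
and the tail $\sum_{k\ge3}\frac{t^k}{k!}|\kappa_k(S_n^f)|$ contains terms of size roughly $(C t\,k_0(k))^k$, which diverges as $k\to\infty$ for every fixed $t>0$. So the cumulant-series route breaks down; note also that the divisibility trick behind Proposition~\ref{prop:graph-construction} is unavailable here since the ratios are not assumed integer.

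The paper sidesteps this entirely. Instead of expanding $\log\EE e^{tS_n}$ in cumulants, it Taylor-expands each factor $e^{\theta\sqrt{z_n}X_j}=1+\theta\sqrt{z_n}X_j+\tfrac12\theta^2 z_n X_j^2+O(z_n^{3/2})$ using only the boundedness of $X_j$, and then multiplies out. The only moment identities needed are for products $X_{j_1}^{\delta_1}\cdots X_{j_\ell}^{\delta_\ell}$ with $\delta_r\in\{1,2\}$ and distinct $j_r>k_0$, and for those a \emph{fixed} $k_0$ (the index after which $a_{n+1}\ge 4Da_n$) suffices to force complete factorization, cf.~\eqref{eq:MDP-large-gaps-pom1}. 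The first $k_0$ summands are absorbed as a bounded perturbation. This gives $\frac1{nz_n}\log\EE e^{\theta\sqrt{z_n}S_n}\to\theta^2\EE X_1^2/2$ directly, without any control on high-order cumulants. The same fixed-$k_0$ argument also yields $\EE(S_n^f)^2=n\EE X_1^2+O(1)$. If you want to repair your approach, the cleanest fix is to abandon the cumulant series and mimic this second-order expansion; any argument that needs uniform-in-$k$ structural control will require a quantitative gap condition beyond \eqref{eq:large-gap-condition}.
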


	\begin{remark*}
	  Our two results show that the threshold from question~\ref{item:Q1} may exist only logarithmically close to the LDP scaling -- see the remark after Theorem~\ref{thm:integer-ratios}.
	  
	   Moreover, Remark~\ref{rmk:var-upper-bound} below implies that under  assumption \eqref{condition-var}, $\EE (S_n^f)^2$ is in fact of order $n$, so $z_n\equiv1$ would corresponds to the LDP scaling $y_n=\frac 1n$, while $z_n=\frac 1n$ would correspond to the CLT scaling $y_n=\frac1{\sqrt n}$, where the ``speed'' is constantly equal to $1$.
	  Therefore, Theorems~\ref{thm:integer-ratios} and \ref{thm:MDP-large-gap} answer question~\ref{item:Q2} in  the case when all the ratios $\frac{a_{n+1}}{a_n}$ are integers (then we need to assume additionally that the scaling sequence $y_n$ is  $o(n/\log n)$)  or when the large gap condition holds (with no additional restriction on the scaling sequence $y_n$).
	\end{remark*}
	
	\begin{remark*}
	 Although we use the method of cumulants in the proof of Theorem~\ref{thm:integer-ratios}, we are not able to obtain the bound for cumulants good enough to get the mod-Gaussian convergence for free, see also Remark~\ref{rmk:no-mod-G-for-free} below. However, we   are able to provide the mod-Gaussian convergence\footnote{The authors of \cite{DH18} --- a paper which seems to have never been published --- provide some results for lacunary trigonometric sums, but in their theorems a mod-Gaussian convergence refers to a weaker notion they introduce in \cite[Definition~1.8]{DH18}. Our result involves a standard, stronger notion of mod-Gaussian convergence.} under a \emph{very} large gap condition, which is unfortunately stronger than \eqref{eq:large-gap-condition}. We postpone the precise statement to Section~\ref{sect:mod-G}.
	  \end{remark*}
	  
	\vskip 1mm
	
	\textbf{Organization of the paper.} The rest of this paper is organized as follows. In Section~\ref{sect:MDP} we provide some preliminary results and then prove Theorem~\ref{thm:integer-ratios} by the method of cumulants, introducing the notion of correlation graphs. 
	At the end of Section~\ref{sect:MDP}  we give a proof of Theorem~\ref{thm:MDP-large-gap}. 
	In Section~\ref{sect:mod-G} we present our results concerning mod-Gaussian convergence and give their proofs.

\section{Moderate deviations via cumulants}\label{sect:MDP}

Let us begin with a quick warm-up, providing preliminary results which we shall use later in the proofs of the MDP and the mod-Gaussian convergence. Note that an iterated application of the Hadamard gap condition \eqref{eq:lacunary-condition} implies that for every $k\in\NN$ and any two indices $i,j \in \{1,\ldots, n\}$ with $i-j\ge k$, we have
\begin{equation}
	{a_{j}} \le q^{-k} a_{i}.
	\label{eq:lacunary-condition-k-steps}
\end{equation}

We now consider a sequence of natural numbers $(t_n)_{n\in\NN}$. Recall that $f(x)=\sum_{d=-D}^D c_de^{2\pi idx}$, $X_j=f(a_jU)$,  $S_n^f = \sum_{j=1}^n X_j$, and $T_n^f = \sum_{j=1}^n Y_j$, where $Y_1,Y_2,\ldots$ are independent copies of $X_1$. Then for any $r\in \NN$, we have
	\begin{align}\label{eq:expectation-solutions}
		\EE \prod_{j=1}^r X_{t_j} 
		& = \int_0^1 \prod_{j=1}^r \sum_{d=-D}^D c_d e^{2\pi ida_{t_j}x} \, \text{d}x
		\\ \nonumber
		& = \sum_{d_1,\ldots, d_r \in \{-D,\ldots , D\}} c_{d_1}\cdots c_{d_r} \int_0^1 e^{2\pi ix \sum\limits_{j=1}^r d_j a_{t_j}}  \, \text{d}x 
		\\ \nonumber
		& = \sum_{d_1,\ldots, d_r \in \{-D,\ldots , D\}} c_{d_1}\cdots c_{d_r} \ind_{\{\sum\limits_{j=1}^r d_j a_{t_j} =0 \}}.
	\end{align}
	Moreover, for every $n,m\in \NN$,
	\begin{align}	\label{eq:moms-Sn}
		\EE (S_n^f)^m 
		& = \int_0^1 \Bigl( \sum_{\ell=1}^n \sum_{d=-D}^D c_de^{2\pi i da_\ell x} \Bigr)^m  \, \text{d}x 
		\\ \nonumber
		 &= \sum_{\substack{\ell_1,\ldots, \ell_m \in \{1,\ldots ,n\} \\ d_1,\ldots, d_m \in \{-D,\ldots,D\} }} c_{d_1}\cdots c_{d_m} \int_0^1 e^{2\pi i x \sum\limits_{j=1}^m d_j a_{\ell_j}} \, \text{d}x
		\cr \nonumber
		 & = \sum_{\substack{\ell_1,\ldots, \ell_m \in \{1,\ldots ,n\} \\ d_1,\ldots, d_m \in \{-D,\ldots,D\} }} c_{d_1}\cdots c_{d_m} \ind_{\{ \sum\limits_{j=1}^m d_j a_{\ell_j} = 0\}},
	\end{align}
	and
	\begin{align}	\label{eq:moms-Sn-indep}
		\EE (T_n^f)^m 
		& = \int_{[0,1]^n} \Bigl( \sum_{\ell=1}^n \sum_{d=-D}^D c_de^{2\pi i da_\ell x_\ell} \Bigr)^m  \text{d}(x_1,\ldots,x_n) 
		\\ 
		 &= \sum_{\substack{\ell_1,\ldots, \ell_m \in \{1,\ldots ,n\} \\ d_1,\ldots, d_m \in \{-D,\ldots,D\} }} c_{d_1}\cdots c_{d_m} \int_{[0,1]^n} e^{2\pi i  \sum\limits_{j=1}^m d_j a_{\ell_j}x_{\ell_j}} \text{d}(x_1,\ldots,x_n) 
		\cr \nonumber
		 & = \sum_{\substack{\ell_1,\ldots, \ell_m \in \{1,\ldots ,n\} \\ d_1,\ldots, d_m \in \{-D,\ldots,D\} }} c_{d_1}\cdots c_{d_m} \prod_{k=1}^n \Biggl( \int_0^1 e^{2\pi i x a_k \sum\limits_{j=1}^m d_j \ind_{\{\ell_j=k\}}} \text{d}x \Biggr)
		 \cr
		 & = \sum_{\substack{\ell_1,\ldots, \ell_m \in \{1,\ldots ,n\} \\ d_1,\ldots, d_m \in \{-D,\ldots,D\} }} c_{d_1}\cdots c_{d_m} \prod_{k=1}^n	  \ind_{\{ \sum\limits_{j=1}^m d_j \ind_{\{\ell_j=k\}} = 0\}}.
	\end{align}

	In general, when condition \eqref{condition-var} on the variance of $S_n^f$ is violated, we cannot hope that $(\frac{S_n}{y_n})_{n\in\NN}$ satisfies an MDP for any sequence $(y_n)_n$ going to $\infty$, as the example of telescoping sums described in the introduction shows.
	Let us present three natural conditions implying  \eqref{condition-var}.
	
	\begin{proposition}	\label{prop:conds-implying-var}
		Assume that $(a_n)_{n\in\NN}$ is a (not necessarily lacunary) sequence of positive integers, and $f\not\equiv 0$ is a trigonometric polynomial of the form \eqref{eq:trigonometric polynomial} with degree $D$, $c_0=0$, and  $c_d=-c_d$ for every $d$.
		Then each of the following conditions implies   \eqref{condition-var}:
		\begin{enumerate}[(A)]
			\item \label{item:cond-c_d-nonnegative}
			$c_d\ge 0$ for every $d\in \{1,\ldots , D\}$.
			\item	\label{item:cond-number-of-solutions}
			 \[\sup_{d, d' \in \{1,\ldots , D\} } \# \Bigl\{ (\ell,\ell') \in \{1,\ldots, n\} ^2 \colon da_\ell=d'a_{\ell'}, \  \ell \neq \ell' \Bigr\} = o(n).\]
			\item	\label{item:cond-a_n-bigger-D}
			There exists $k_0\in \NN$ such that
			 $\frac{a_{n+1}}{a_n} > D$ for every integer $n \ge k_0$.
		\end{enumerate}
	\end{proposition}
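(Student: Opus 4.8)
The plan is to reduce everything to the second-moment identity coming from \eqref{eq:moms-Sn} with $m=2$: isolating the diagonal $\ell_1=\ell_2$, whose contribution is $n\sum_{d=-D}^{D}c_d^2=n\,\EE X_1^2$, one obtains
\[
	\EE (S_n^f)^2 = n\,\EE X_1^2 + R_n, \qquad R_n\coloneqq\sum_{\substack{i,j=1\\ i\ne j}}^{n}\EE[X_iX_j]=2\!\!\sum_{1\le i<j\le n}\!\!\EE[X_iX_j],
\]
where $\EE[X_iX_j]=\sum_{0<|d_1|,|d_2|\le D}c_{d_1}c_{d_2}\,\ind_{\{d_1a_i+d_2a_j=0\}}$ by \eqref{eq:expectation-solutions}; note $\EE X_1^2=\sum_{d}c_d^2>0$ because $f\not\equiv 0$, and in every summand of $R_n$ the integers $d_1,d_2$ have opposite signs since $a_i,a_j>0$. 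Granting this, I would deduce \eqref{condition-var} from two separate facts: (i) $\EE(S_n^f)^2>0$ for \emph{every} $n\in\NN$; and (ii) under each of (A), (B), (C) one has $R_n\ge-\tfrac12 n\,\EE X_1^2$ for all sufficiently large $n$. Indeed (ii) gives $\EE(S_n^f)^2\ge\tfrac12 n\,\EE X_1^2$ past some index $N$, while (i) makes $\min_{1\le n<N}\EE(S_n^f)^2/n$ a positive number, and $c(f,a)$ is then the minimum of the two.

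For (i) I would exhibit one nonvanishing Fourier coefficient of $S_n^f$. Put $a_{\min}=\min_{1\le\ell\le n}a_\ell$ and $d^\ast=\min\{d\ge 1\colon c_d\ne 0\}$, which exists because $f\not\equiv 0$ and $c_0=0$. Any pair $(d,\ell)$ contributing to the frequency $d^\ast a_{\min}$ has $d\ge 1$ and $d=d^\ast a_{\min}/a_\ell\le d^\ast$, so either $c_d=0$ or $d=d^\ast$ and $a_\ell=a_{\min}$; hence the Fourier coefficient of $S_n^f$ at $d^\ast a_{\min}$ equals $c_{d^\ast}\cdot\#\{\ell\le n\colon a_\ell=a_{\min}\}\ne 0$, and $\EE(S_n^f)^2=\int_0^1|S_n^f(x)|^2\,\text{d}x>0$. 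This step uses nothing beyond the $a_\ell$ being positive integers.

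For (ii) I would treat the three hypotheses in turn. Under (A), $c_{d_1}c_{d_2}=c_{|d_1|}c_{|d_2|}\ge 0$ in every summand of $R_n$, so $R_n\ge 0$ and in fact $\EE(S_n^f)^2\ge n\,\EE X_1^2$ outright, with $c(f,a)=\EE X_1^2$. Under (B), for $i\ne j$ a solution of $d_1a_i+d_2a_j=0$ with $d_1,d_2\ne 0$ forces $|d_1|a_i=|d_2|a_j$ with $|d_1|,|d_2|\in\{1,\dots,D\}$, so for each fixed $(d_1,d_2)$ the number of contributing pairs $(i,j)$ is at most $s_n\coloneqq\sup_{d,d'\in\{1,\dots,D\}}\#\{(\ell,\ell')\colon da_\ell=d'a_{\ell'},\ \ell\ne\ell'\}$; summing over the finitely many coefficient pairs yields $|R_n|\le\bigl(\sum_{d\ne 0}|c_d|\bigr)^2 s_n=o(n)$. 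Under (C), if $k_0\le i<j$ then $a_j\ge a_{i+1}>D a_i\ge|d_1|a_i$ for every $|d_1|\le D$, so $d_1a_i+d_2a_j=0$ has no solution and $\EE[X_iX_j]=0$; hence a nonzero summand of $2\sum_{i<j}\EE[X_iX_j]$ requires $i<k_0$ and $a_j\le D a_i\le D\max_{\ell<k_0}a_\ell$, which, since $a_{k_0+m}>D^m a_{k_0}\to\infty$, happens for only finitely many $(i,j)$; so $R_n$ is bounded (indeed eventually constant) and $\EE(S_n^f)^2=n\,\EE X_1^2+O(1)$. In cases (B) and (C) one then combines the resulting control on $R_n$ with (i) exactly as above.

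The argument contains no hard estimate; the single point requiring care is that \eqref{condition-var} is asserted for \emph{all} $n$, whereas the bounds $R_n=o(n)$ and $R_n=O(1)$ only take hold beyond a threshold — which is precisely why step (i), the elementary but genuinely necessary fact that $S_n^f$ never degenerates to the zero function, cannot be omitted.
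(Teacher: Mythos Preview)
Your proof is correct and follows the same route as the paper: write $\EE(S_n^f)^2=n\,\EE X_1^2+R_n$ via the diagonal/off-diagonal split of \eqref{eq:moms-Sn}, observe $R_n\ge 0$ under (A), and show $R_n=o(n)$ under (B), respectively $R_n=O(1)$ under (C). The paper phrases (C) as a reduction to (B) (showing the solution count is $O(1)$, hence $o(n)$), whereas you bound $R_n$ directly; the content is the same.

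The one place where your write-up goes beyond the paper is step~(i). The paper's proof for (B) and (C) ends at $\EE(S_n^f)^2\ge n\,\EE X_1^2-o(n)$ and simply asserts this is $\ge c(f,a)n$, which literally yields the bound only for large $n$; the finitely many remaining $n$ require exactly the positivity $\EE(S_n^f)^2>0$ that you supply by locating a nonzero Fourier coefficient at the minimal frequency $d^\ast a_{\min}$. So your argument is actually a shade more complete than the paper's on this point.
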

	We postpone the proof of Proposition~\ref{prop:conds-implying-var} to the end of this section.
	The proof that \ref{item:cond-number-of-solutions} implies \eqref{condition-var} is  contained on page 285 of \cite{AB2010}, where the authors noticed\footnote{In a remark following \cite[Theorem~1.2]{AB2010}.} that condition \ref{item:cond-number-of-solutions} holds under the large gap condition. 
	Since our proof needs no additional notation and is short, we decided to include it in this article.
	Moreover, in the last part of the proof of Theorem~\ref{thm:MDP-large-gap} we show that under the large gap condition property \ref{item:cond-number-of-solutions} holds even with $O(1)$ in the place of $o(n)$.

	\begin{example}\label{ex:cosine}
		In a classical case where $f(x) = \cos(2\pi x)$, we may easily calculate $\EE S_n^2$ using, say, the formula presented in \eqref{eq:moms-Sn}:
		\[
			\EE S_n^2 = \frac 14 \# \Bigl\{ (\eps, \eps', \ell,\ell')\in \{-1,1\}^n\times \{1,\ldots, n\}^2 \colon \eps a_\ell +\eps'a_{\ell'}=0  \Bigr\}= \frac n2,
		\]
		so condition \eqref{condition-var} holds and Theorem~\ref{thm:integer-ratios} implies that for every $(a_n)_n$ and $(z_n)_n$ satisfying the assumptions of Theorem~\ref{thm:integer-ratios} or Theorem~\ref{thm:MDP-large-gap}, a random sequence $(\frac{S_n}{y_n})_n$ satisfies a moderate deviation principle with speed $x_n$ and rate function $I(t)=t^2$, where $x_n = nz_n$ and $y_n = n\sqrt{z_n}$. 
	\end{example}

	\begin{remark}	\label{rmk:var-upper-bound}
	Let us note that the bound reverse to \eqref{condition-var} is always satisfied.
	More precisely,
	assume that $(a_n)_n$ is a (not necessarily lacunary)  sequence of positive integers  and $f$ is  a trigonometric polynomial with $c_d=-c_d$ for every $d$. 
	Then
	\begin{equation} 	\label{eq:condition-var-reversed}
		\EE (S_n^f)^2 \le C(f) n,
	\end{equation}
	where $C(f)\in(0,\infty)$ is a constant depending only on $f$. 
	
	Indeed, \eqref{eq:moms-Sn} and inequality $2xy\le (x^2 +y^2)$ imply that
	\begin{multline*}
		\EE S_n^2 
		= 2n \sum_{d=0}^D c_d^2 
		+ 2 \sum_{\substack { d,d'\in \{0,\ldots, D \} \\ d\neq d'}} c_d c_{d'} \# \Bigl\{ (\ell,\ell') \in \{1,\ldots , n \} \colon da_\ell = d'a_{\ell'} \Bigr\}
		\\ \le 2n \sum_{d=0}^D c_d^2 
		+  \sum_{d,d'\in \{0,\ldots, D \} } ( c_d^2  + c_{d'}^2 ) n 
		=  4n \sum_{d=0}^D c_d^2 .
	\end{multline*}
	\end{remark}

	Before we move on to the proof of Theorem~\ref{thm:integer-ratios} we need some preparation. We first introduce a modification of a standard definition of the dependency graph. Since in our case there is a lack of independence, we substitute it with a type of uncorrelation property. This leads us to the following definition.

\begin{definition}
	A graph $G$ with the vertex set $V$ is called a \emph{correlation graph of range $M$} for the family of random variables $(Z_v)_{v\in V}$ if the following property is satisfied: {\upshape if $V_1$ and $V_2$ are two disjoint nonempty subsets of vertices such  that there are no edges in $G$ between $V_1$ and $V_2$, then for every sequences $(v_n)_{n\in\NN}$ of elements of $V_1$ and $(w_n)_{n\in\NN}$ of elements of $V_2$  and for every $r,s\in \NN$ such that $r+s\le M$,
	\begin{equation}
		\EE \prod_{i=1}^r Z_{v_i} \prod_{j=1}^s Z_{w_j} = \EE \prod_{i=1}^r Z_{v_i} \ \EE\prod_{j=1}^s Z_{w_j} .
		\label{eq:uncorr-condition}
	\end{equation}}
\end{definition}

Let us recall that for $m\in\NN$ the $m$-th cumulant of a random variable $Z$ satisfying $\EE e^{tZ}<\infty$ in a neighbourhood of $0$ is defined as
  \[
    \gamma_m(Z) = \frac{\partial^m}{\partial t^m} \Big(\log \EE\big(e^{tZ}\big) \Big)\Big|_{t=0}.
  \]
We shall use the following version of \cite[Theorem~9.1.7]{Mod-phi-book} in order to prove Theorem~\ref{thm:integer-ratios}.

\begin{theorem}\label{thm:corr-graphs}
	Let $A>0$, $M \in \NN$, and let $G$ be a correlation graph of range $M\in\NN$ with vertex set $V$ for the family $(Z_v)_{v\in V}$ of random variables almost surely valued in $[-A,A]$. 
	Let $W\subseteq V$ be of size $n\in\NN$ and let $\deg(G)$ be the maximal degree of vertices in $G$.
	Then for every integer $m\le M$,
	    \[
		\Bigl| \gamma_m \bigl(\sum_{v\in W} Z_v\bigr) \Bigr| \le 2(2m)^{m-2} n\bigl(\deg(G)+1\bigr)^{m-1}A^m.
	    \]
\end{theorem}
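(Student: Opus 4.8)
The statement is a cumulant bound for sums over a correlation graph, which mirrors the classical dependency-graph cumulant estimate (e.g. in the mod-$\phi$ book). The plan is to reduce the cumulant of $\sum_{v\in W}Z_v$ to a weighted sum over connected subgraphs spanned by $m$-tuples of vertices, and then to count those subgraphs using the maximal degree $\deg(G)$. First I would recall the combinatorial formula expressing $\gamma_m(\sum_{v\in W}Z_v)$ via the moment-cumulant (Möbius) relation: expanding by multilinearity,
\[
\gamma_m\Bigl(\sum_{v\in W}Z_v\Bigr)=\sum_{(v_1,\dots,v_m)\in W^m}\gamma(Z_{v_1},\dots,Z_{v_m}),
\]
where $\gamma(Z_{v_1},\dots,Z_{v_m})=\sum_{\pi}\mu(\pi)\prod_{B\in\pi}\EE\prod_{i\in B}Z_{v_i}$ is the joint cumulant, the sum running over set partitions $\pi$ of $\{1,\dots,m\}$ with Möbius weights $\mu(\pi)=\prod_{B\in\pi}(-1)^{|B|-1}(|B|-1)!$.

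The key structural point, exactly as in the dependency-graph argument, is that $\gamma(Z_{v_1},\dots,Z_{v_m})$ vanishes unless the induced subgraph $G[\{v_1,\dots,v_m\}]$ is connected. Here one invokes the defining property of a correlation graph of range $M$: if $\{v_1,\dots,v_m\}$ splits into two parts $V_1,V_2$ with no edges between them, then by \eqref{eq:uncorr-condition} (applicable since $m\le M$) the family factorizes across $V_1$ and $V_2$ in every relevant mixed moment, and a standard lemma on joint cumulants then forces $\gamma(Z_{v_1},\dots,Z_{v_m})=0$. Since the $Z_v$ are bounded by $A$, each surviving joint cumulant is bounded in absolute value by $|\gamma(Z_{v_1},\dots,Z_{v_m})|\le (2m)^{m-2}A^m$ (a Leonov–Shiryaev / Malyshev type estimate — one may also quote the bound already encapsulated in the proof of \cite[Theorem~9.1.7]{Mod-phi-book}), the factor $2$ absorbing partition-counting slack.

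It then remains to bound the number of $m$-tuples $(v_1,\dots,v_m)\in W^m$ whose vertex support spans a connected subgraph of $G$. Such a tuple determines a connected (multi)set of at most $m$ vertices meeting $W$, and the standard count of connected subgraphs containing a fixed vertex and reachable in $\le m-1$ steps gives at most $n\,m^{m-2}(\deg(G)+1)^{m-1}$ — choose the first coordinate in $W$ ($n$ ways), then grow a spanning tree, each new vertex lying among the $\le\deg(G)$ neighbours of an already-chosen one (or repeating a chosen vertex, hence the $+1$), and Cayley's formula $m^{m-2}$ for the labelled tree shapes. Multiplying the per-tuple bound by the tuple count yields
\[
\Bigl|\gamma_m\Bigl(\sum_{v\in W}Z_v\Bigr)\Bigr|\le 2(2m)^{m-2}\,n\,(\deg(G)+1)^{m-1}A^m,
\]
after absorbing the $m^{m-2}$ from tree-counting into the $(2m)^{m-2}$. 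The main obstacle is the bookkeeping in the second and third steps: making precise the reduction to connected subgraphs using only the (weaker) uncorrelation property \eqref{eq:uncorr-condition} rather than genuine independence, and carrying out the connected-subgraph count with the correct constants so that everything collapses into the stated clean bound. Since this is essentially \cite[Theorem~9.1.7]{Mod-phi-book} with "dependency graph'' replaced by "correlation graph'' and "independence'' by \eqref{eq:uncorr-condition}, I would present it as an adaptation of that proof, pointing out precisely where the correlation-graph hypothesis suffices.
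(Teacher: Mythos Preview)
Your proposal is correct and takes essentially the same approach as the paper. In fact, the paper is even more terse than you are: it simply states that the proof is identical to that of \cite[Theorem~9.1.7]{Mod-phi-book}, observing that the only place in that argument where independence is invoked is \cite[condition~(9.7)]{Mod-phi-book}, which is precisely the uncorrelation condition \eqref{eq:uncorr-condition}, and that estimating cumulants of order at most $M$ requires this condition only for $r+s\le M$. Your sketch unpacks the underlying mechanism (multilinearity, vanishing of joint cumulants on disconnected index sets, spanning-tree count), which is exactly the content of the cited proof, and you correctly flag the one conceptual substitution needed.
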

Its proof is exactly the same as the proof of \cite[Theorem~9.1.7]{Mod-phi-book} --- the only step in this proof using the independence assumption is \cite[condition~(9.7)]{Mod-phi-book}, and this condition is exactly the uncorrelation assumption \eqref{eq:uncorr-condition} above. Moreover, to estimate the cumulants up to order $M$, we only need the information about mixed moments up to order $M$, so we need to assume \eqref{eq:uncorr-condition} only for $r$ and $s$ satisfying $r+s\le M$.

In order to prove Theorem~\ref{thm:integer-ratios} we shall use a bound on cumulants obtained by Theorem~\ref{thm:corr-graphs}. Therefore, we require the following proposition (which is valid even if condition \eqref{condition-var} is not satisfied). It is  a key step in the proof of Theorem~\ref{thm:integer-ratios}. Recall that if $v$ and $w$ are vertices in an undirected graph $G$, then $v\sim w$ means that there is an edge in $G$ between $v$ and $w$.
	
	\begin{proposition}\label{prop:graph-construction}
		Let $(a_n)_{n\in\NN}$ be a lacunary sequence satisfying $\frac{a_{n+1}}{a_n}\in \{2,3,\ldots\}$ for every $n\in\NN$ and let $X_n=f(Ua_n)$, $n\in\NN$, where $f$ is the centered and symmetric trigonometric polynomial of degree $D\in\NN$ from \eqref{eq:trigonometric polynomial}. 
		Assume that $M\in\NN$
		and set $k= \lceil \log (MD)/ \log 2\rceil$.
		Let $G$ be the graph with the vertex set $V=\NN$ such that 
		\[
		i\sim j \text{\qquad if and only if \quad} |i-j|\le k.
		\] 
		Then $G$ is a correlation graph of range $M$ for the family $(X_n)_{n\in\NN}$.
	\end{proposition}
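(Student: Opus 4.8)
The plan is to translate the assertion, via the moment formula \eqref{eq:expectation-solutions}, into an elementary Diophantine statement about integer linear combinations of the $a_n$'s, and then to prove that statement by a separation-of-scales argument that uses both the integrality of the ratios $\frac{a_{n+1}}{a_n}$ and the gap $k$ between the two vertex classes. Concretely, fix disjoint nonempty $V_1,V_2\subseteq\NN$ with no edge of $G$ between them --- so $|u-v|>k$ whenever $u\in V_1$ and $v\in V_2$ --- fix tuples $v_1,\dots,v_r\in V_1$ and $w_1,\dots,w_s\in V_2$ with $r+s\le M$, and fix $d_i,e_j\in\{-D,\dots,D\}$. By \eqref{eq:expectation-solutions}, each of the three expectations in \eqref{eq:uncorr-condition} is a finite sum of terms $\prod c_{\bullet}\cdot\ind_{\{\,\cdot\,=0\}}$, and since $\ind_{\{A=0\}}\ind_{\{B=0\}}\le\ind_{\{A+B=0\}}$ holds termwise, \eqref{eq:uncorr-condition} will follow once we prove the implication
\[
  \sum_{i=1}^r d_i a_{v_i}+\sum_{j=1}^s e_j a_{w_j}=0 \ \Longrightarrow\ \sum_{i=1}^r d_i a_{v_i}=0 \ \text{ and }\ \sum_{j=1}^s e_j a_{w_j}=0 .
\]

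To prove this implication, first consolidate repeated indices: write the two sums as $\Sigma_1=\sum_{\ell\in I_1}m_\ell a_\ell$ and $\Sigma_2=\sum_{\ell\in I_2}m_\ell a_\ell$, where $I_1\subseteq V_1$ and $I_2\subseteq V_2$ collect the indices carrying a nonzero net coefficient $m_\ell$; then $|m_\ell|\le p_\ell D$ with $p_\ell$ the multiplicity, $\sum_\ell p_\ell\le r+s\le M$, and $I_1,I_2$ still satisfy $|u-v|>k$ across classes. If $I_1$ or $I_2$ is empty, the implication is immediate from $\Sigma_1+\Sigma_2=0$. Otherwise list $I:=I_1\cup I_2$ in increasing order and split it into maximal monochromatic runs $R_1,\dots,R_b$ with $b\ge 2$; consecutive runs have opposite colours, so the gap hypothesis gives $\min R_{t+1}-\max R_t>k$ for every $t$.

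The core step is a divisibility-versus-size comparison on the top run $R_b$. Put $\beta=\min R_b$ and $\sigma_t=\sum_{\ell\in R_t}m_\ell a_\ell$. Since every ratio $\frac{a_{n+1}}{a_n}$ is a positive integer, $a_\ell$ is an integer multiple of $a_\beta$ for all $\ell\ge\beta$, so $\sigma_b$, and hence $\sum_{t<b}\sigma_t=-\sigma_b$, is an integer multiple of $a_\beta$. On the other hand every index occurring in $R_1\cup\dots\cup R_{b-1}$ is at most $\beta-k-1$, so, using $|m_\ell|\le p_\ell D$, $\sum_\ell p_\ell\le M$, monotonicity of $(a_n)$, the ratios being $\ge 2$, and $DM\le 2^k$ (the defining property of $k$),
\[
  \Bigl|\sum_{t<b}\sigma_t\Bigr|\ \le\ \sum_{\ell\le\beta-k-1}|m_\ell|\,a_\ell\ \le\ DM\,a_{\beta-k-1}\ \le\ DM\cdot 2^{-(k+1)}a_\beta\ \le\ \tfrac12\,a_\beta\ <\ a_\beta .
\]
An integer multiple of $a_\beta$ of modulus $<a_\beta$ is zero, so $\sigma_b=0$ and $\sum_{t<b}\sigma_t=0$. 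Iterating this on the shorter chain $R_1,\dots,R_{b-1}$ --- whose consecutive runs are still oppositely coloured and $k$-separated and whose total coefficient mass is still $\le M$ --- gives $\sigma_t=0$ for every $t$. Summing the vanishing run-sums over the runs of colour $1$ yields $\Sigma_1=0$, and likewise $\Sigma_2=0$, which is exactly the desired implication; feeding it back through \eqref{eq:expectation-solutions} establishes \eqref{eq:uncorr-condition} for all $r+s\le M$, i.e.\ that $G$ is a correlation graph of range $M$.

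I expect the comparison in the previous paragraph to be the main obstacle, and it is the reason the hypothesis $\frac{a_{n+1}}{a_n}\in\{2,3,\dots\}$ (rather than a bare Hadamard gap) is needed: a naive ``largest-term-dominates'' estimate fails, because within a single colour class genuine cancellation among the $a_\ell$'s occurs --- this dependence is exactly the phenomenon of interest for lacunary sums. One must instead localise to maximal monochromatic runs and play the \emph{divisibility} $a_\beta\mid\sigma_b$ (which needs integer ratios) off against a size bound across the $k$-gap, with $k$ chosen just large enough that $2^k\ge DM$ absorbs the crude total-mass factor $DM$.
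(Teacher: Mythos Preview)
Your proof is correct and follows essentially the same approach as the paper. Both reduce via \eqref{eq:expectation-solutions} to the Diophantine implication, partition the relevant index set into maximal monochromatic runs, and play divisibility by $a_{\min R_b}$ (from the integer-ratio hypothesis) off against the size bound $MD\cdot 2^{-(k+1)}a_{\min R_b}<a_{\min R_b}$ coming from the $k$-gap between colour classes. The only organisational difference is that you run a clean top-down induction (peel off the top run, show $\sigma_b=0$, repeat), whereas the paper argues by contradiction (assume $\Sigma_1,\Sigma_2\neq 0$, locate the topmost run with nonzero partial sum, and derive a contradiction); the mathematical content is identical.
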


	\begin{proof}
		Fix two nonempty disjoint subsets of vertices $V_1\subseteq \NN$ and $V_2\subseteq \NN$  such  that there are no edges in $G$ between $V_1$ and $V_2$. 
		Fix a sequence  $(v_n)_{n\in\NN}$  of elements of $V_1$, 
		 a sequence $(w_n)_{n\in\NN}$ of elements of $V_2$, and two numbers $r,s\in\NN$ such that $r+s\le M$.

		By \eqref{eq:expectation-solutions} we have
		\[
			\EE \prod_{i=1}^r X_{v_i} \prod_{j=1}^s X_{w_j} = \sum_{\substack{\eps_1,\ldots, \eps_r \in \{-D,\ldots , D\} \\ \eta_1,\ldots, \eta_s \in \{-D,\ldots, D\} }} c_{\eps_1}\cdots c_{\eps_r}c_{\eta_1}\cdots c_{\eta_s} \ind_{\{\sum_{i=1}^r \eps_i a_{v_i} + \sum_{j=1}^s \eta_j a_{w_j}=0 \}}
		\]
		and
		\begin{multline*}
			\EE \prod_{i=1}^r X_{v_i} \EE\prod_{j=1}^s X_{w_j} 
			\\
			= \sum_{\substack{\eps_1,\ldots, \eps_r \in \{-D,\ldots , D\} \\ \eta_1,\ldots, \eta_s \in \{-D,\ldots, D\} }} c_{\eps_1}\cdots c_{\eps_r}c_{\eta_1}\cdots c_{\eta_s} \ind_{\{\sum_{i=1}^r \eps_i a_{v_i} =0 \}} \ind_{\{  \sum_{j=1}^s\eta_j a_{w_j}=0 \}}.
		\end{multline*}
		Moreover, every pair of solutions (in unknown variables $\eps_i,\eta_j\in\{-D,\ldots,D\}$, $i\le r$, $j\le s$) to the equations $\sum_{i=1}^r \eps_i a_{v_i}=0$ and  $\sum_{j=1}^s \eta_j a_{w_j}=0$ provide a solution to the equation $\sum_{i=1}^r \eps_i a_{v_i}+\sum_{j=1}^s \eta_j a_{w_j}=0$. Hence, in order to confirm \eqref{eq:uncorr-condition}, we only need to prove the following claim: every solution (in unknown variables $\eps_i,\eta_j\in\{-D,\ldots, D\}$) to the equation
		\begin{equation}
			\sum_{i=1}^r \eps_i a_{v_i}+\sum_{j=1}^s \eta_j a_{w_j}=0
			\label{eq:split-me}
		\end{equation}
		splits into two solutions to the equations: $\sum_{i=1}^r \eps_i a_{v_i}=0$  and  $\sum_{j=1}^s \eta_j a_{w_j}=0$. To be more precise, we only need to show that if $\eps_i$'s and $\eta_j$'s satisfy \eqref{eq:split-me}, then 
		\begin{equation}
			\sum_{i=1}^r \eps_i a_{v_i}=0 \quad \text{and}\quad \sum_{j=1}^s \eta_j a_{w_j}=0
			\label{eq:i-am-split}.
		\end{equation}
		
		Fix the solution to \eqref{eq:split-me} from the set $\{-D, \ldots,D\}^{r+s}$, and assume by contradiction that both $\sum_{i=1}^r \eps_i a_{v_i}\neq 0$ and  $\sum_{j=1}^s \eta_j a_{w_j}\neq0$. Let $L\in \{2,3,\ldots\}$, and let $(C_1, \ldots, C_L)$ be a sequence of subsets of $\NN$ satisfying the following properties:	
		\begin{enumerate}
			\item[(1)] $C_1, \ldots, C_L$ are mutually disjoint, non-empty, contain only elements of $\{v_i\colon i\le r\}\cup\{w_j\colon j\le s\}$, and every element of $\{v_i\colon i\le r\}\cup\{w_j\colon j\le s\}$ belongs to $C_1\cup \cdots \cup C_L$.
			\item[(2)] For every $\ell\le L$ the set $C_\ell$ contains either only elements of  $\{v_i\colon i\le r\}$ or only the elements of $\{w_j\colon j\le s\}$.
			\item[(3)] If $\ell\le L-1$ and $C_\ell$ contains only elements of $\{v_i\colon i\le r\}$, then $C_{\ell+1}$ contains only elements of $\{w_j\colon j\le s\}$.
			\item[(3')] If $\ell\le L-1$ and $C_\ell$ contains only elements of $\{w_j\colon j\le s\}$, then $C_{\ell+1}$ contains only elements of $\{v_i\colon i\le r\}$.
			\item[(4)] If $\ell\le L-1$, then the maximal element of $C_\ell$ is smaller than the minimal element of $C_{\ell+1}$.
		\end{enumerate}
		It is clear that there exist exactly one $L\ge 2$ and exactly one sequence $(C_1, \ldots, C_L)$ satisfying these properties.
Property (1) implies that 
		\[
			0 \neq  \sum_{i=1}^r \eps_i a_{v_i} = \sum_{\ell=1}^L \sum_{i=1}^r \eps_i a_{v_i}\ind_{\{v_i\in C_\ell\}} 
		\]
		and similarly
		\[
			0 \neq  \sum_{j=1}^s \eta_j a_{w_j} = \sum_{\ell=1}^L  \sum_{j=1}^s \eta_j a_{w_j}\ind_{\{w_j\in C_\ell\}} .
		\]
		Therefore,  there exists $\ell\le L$ such that 
		\[
			\sum_{i=1}^r \eps_i a_{v_i}\ind_{\{v_i\in C_\ell\}}  \neq 0 \quad 
 		\text{or} \quad
			 \sum_{j=1}^s \eta_j a_{w_j}\ind_{\{w_j\in C_\ell\}} \neq 0;
		\]
		note that only one of the sums above may be nonzero, since by property (2) one of them has all the indicators equal to $0$. Let $\ell_0$ be the maximal $\ell\le L$ with the above property and
		assume without loss of generality that 
		\[
			 \sum_{j=1}^s \eta_j a_{w_j}\ind_{\{w_j\in C_{\ell_0}\}} \neq 0, \qquad 
			  \sum_{i=1}^r \eps_i a_{v_i}\ind_{\{v_i\in C_{\ell_0}\}}  = 0,
		\]
		and the set $C_{\ell_0}$ contains only elements of $\{w_j \colon j\le s\}$.
		Let $j_0\le s$ be such that $w_{j_0} = \min C_{\ell_0}$.
		 
		By the maximality property of $\ell_0$, we know that for every $\ell>\ell_0$, we have
		\begin{equation}\label{eq:terms are zero for l larger than l0}
			\sum _{j=1}^s \eta_j a_{w_j}\ind_{\{w_j\in C_{\ell}\}} = 0 = \sum_{i=1}^r \eps_i a_{v_i}\ind_{\{v_i\in C_\ell\}}.
		\end{equation}
		Therefore, 
		\begin{equation}
			\sum_{\ell=\ell_0}^L \sum_{i=1}^r \eps_i a_{v_i}\ind_{\{v_i\in C_\ell\}}  +\sum_{\ell=\ell_0}^L\sum_{j=1}^s \eta_j a_{w_j}\ind_{\{w_j\in C_{\ell}\}} =\sum_{j=1}^s \eta_j a_{w_j}\ind_{\{w_j\in C_{\ell_0}\}}  \neq0.
			\label{eq:non-zero-div}
		\end{equation}
		In particular, the previous identity \eqref{eq:non-zero-div} together with \eqref{eq:split-me} yields that $\ell_0>1$. Thus, we may consider the set $C_{\ell_0-1}$ later on. Property (4) implies that every element of $\bigcup_{\ell=\ell_0}^L C_{\ell}$ is greater than or equal to $w_{j_0} = \min C_{\ell_0}$. Moreover, by our assumption on the gap sequence $(a_n)_{n\in\NN}$, $a_{n}$ divides $a_{n+1}$ for every $n\in \NN$. Therefore, every term on the left-hand side of \eqref{eq:non-zero-div} is divisible by $a_{w_{j_0}}$, and so is the whole sum on the left-hand side. Since this sum is non-zero, we must have
		\begin{equation}
			\biggl| \sum_{\ell=\ell_0}^L \sum_{i=1}^r \eps_i a_{v_i}\ind_{\{v_i\in C_\ell\}}  +\sum_{\ell=\ell_0}^L\sum_{j=1}^s \eta_j a_{w_j}\ind_{\{w_j\in C_{\ell}\}} \biggr| \ge a_{w_{j_0}};
			\label{eq:div-implies-ineq}
		\end{equation}
note that the previous estimate does not hold for general lacunary sequences without our assumption of integer ratios.	
	
	Since $C_{\ell_0}$ is a subset of $\{w_j \colon j\le s\}$, properties (2) and (3') imply that $C_{\ell_0-1}$ is a subset of $\{v_i \colon i\le r\}$, and so there exists $i_0\le r$  such that $v_{i_0} = \max C_{\ell_0-1}$. 
It now follows from property (4) that
		\[
			v_{i_0}=\max \bigcup_{\ell=1}^{\ell_0-1} C_\ell = \max C_{\ell_0-1}<\min C_{\ell_0}=w_{j_0}.
		\] 
Since there are no edges between the vertex sets $V_1$ and $V_2$, we have $v_{i_0}\not\sim w_{j_0}$ and so by the definition of our graph $G$, 
	        \[
	           |v_{i_0}-w_{j_0}|\ge k+1.
	        \]  
Since $v_{i_0}<w_{j_0}$, we thus have $\max \bigcup_{\ell=1}^{\ell_0-1} C_\ell = v_{i_0} \le  w_{j_0}-k-1$. Hence, \eqref{eq:lacunary-condition-k-steps} implies that for every element $u\in\bigcup_{\ell=1}^{\ell_0-1}C_\ell$, we have 
                 \[ 
                    a_u \le 2^{-k-1}a_{w_{j_0}}.
                  \]   
Moreover, we trivially have the bounds $|\eps_i|, |\eta_j|\le D$ for all $i\le r$ and $j\le s$.
Therefore, the triangle inequality yields
		\begin{align*}
			 \biggl| \sum_{\ell=1}^{\ell_0-1}\sum_{i=1}^r \eps_i a_{v_i} &  \ind_{\{v_i\in C_\ell\}}  + \sum_{\ell=1}^{\ell_0-1} \sum_{j=1}^s \eta_j a_{w_j}\ind_{\{w_j\in C_{\ell}\}}  \biggr|  \cr
			& \le  D \sum_{\ell=1}^{\ell_0-1}\sum_{i=1}^r a_{v_i} \ind_{\{v_i\in C_\ell\}}   + D\sum_{\ell=1}^{\ell_0-1} \sum_{j=1}^s a_{w_j}\ind_{\{w_j\in C_{\ell}\}}  \cr
			& \le  D\sum_{\ell=1}^{\ell_0-1}\sum_{i=1}^r 2^{-k-1}a_{w_{j_0}}  \ind_{\{v_i\in C_\ell\}}  +D\sum_{\ell=1}^{\ell_0-1} \sum_{j=1}^s 2^{-k-1}a_{w_{j_0}} \ind_{\{w_j\in C_{\ell}\}}  \cr
			& \le  D2^{-k-1}a_{w_{j_0}} (s+r) 
			 \le  MD 2^{-k-1}a_{w_{j_0}} 
			 <  a_{w_{j_0}},
		\end{align*}
where the last bound follows directly from our choice $k= \lceil \log (MD)/ \log 2\rceil$.	
	This bound, the triangle inequality, and  estimate  \eqref{eq:div-implies-ineq} imply 		
	       \begin{align*}
		        \biggl| \sum_{i=1}^r \eps_i a_{v_i}+\sum_{j=1}^s \eta_j a_{w_j} \biggr| &=  \biggl| \sum_{\ell=1}^{L}\sum_{i=1}^r \eps_i a_{v_i}\ind_{\{v_i\in C_\ell\}}  + \sum_{\ell=1}^{L} \sum_{j=1}^s \eta_j a_{w_j}\ind_{\{w_j\in C_{\ell}\}}  \biggr| \cr
			&\ge \biggl| \sum_{\ell=\ell_0}^{L}\sum_{i=1}^r \eps_i a_{v_i}\ind_{\{v_i\in C_\ell\}}  + \sum_{\ell=\ell_0}^L \sum_{j=1}^s \eta_j a_{w_j}\ind_{\{w_j\in C_{\ell}\}}  \biggr| \cr
			& \qquad-  \biggl| \sum_{l=1}^{l_0-1}\sum_{i=1}^r \eps_i a_{v_i}\ind_{\{v_i\in C_l\}}  + \sum_{l=1}^{l_0-1} \sum_{j=1}^s \eta_j a_{w_j}\ind_{\{w_j\in C_{l}\}}  \biggr|
			>  0,
		\end{align*}
		which contradicts \eqref{eq:split-me}. The proof is therefore complete.
	\end{proof}

	The following result by Rudzkis, Saulis, and Statuljavi\v{c}us from \cite{RSS78} (see also \cite[Lemma~2.2]{SS-book}) is a classical tool which allows to deduce the MDP from  bounds on cumulants.
	
	\begin{lemma}[{\cite{RSS78} or \cite[Lemma~2.2]{SS-book}}]	\label{lem:cum-tails}
	Let $\Theta >0$ and $1\le s\le 2\Theta^2$. Assume that $Z$ is a centered random variable with variance  $1$ satisfying the condition
	\begin{equation*}
		|\gamma_m(Z)| \le \frac{(m-2)!}{\Theta^{m-2}} \qquad \text{for all } m=3, 4, \ldots, s+2.
	\end{equation*}
	Then for every $0\le x < \frac{\sqrt s }{ 3\sqrt e}$ we have the tail bound
	\begin{equation} \label{eq:tail-bound1}
		\PP(Z>x) = \PP(G>x) e^{L(x)} \Bigl( 1+ f(x) \frac{x+1}{\sqrt s} \Bigr),
	\end{equation}
		where $G\sim \mathcal{N}(0,1)$, $f\colon [0,  \frac{\sqrt s }{3\sqrt e} ) \to (0,\infty) $ is a function satisfying
		\[
			f(x) \le \frac{117 +96 s \exp\bigl(-\frac 12 s^{1/4}(1- \frac{3\sqrt e x}{\sqrt s})\bigr)}{1- \frac{3\sqrt e x }{\sqrt s} },
		\]  
		and $|L(x)| \le \frac{5|x|^3}{  4\Theta}$ for every  $|x|<\frac{ \sqrt{2} \Theta}{3\sqrt e}$. 
	\end{lemma}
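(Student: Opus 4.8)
This is a classical estimate from the cumulant approach to moderate deviations, so the plan is only to sketch the standard saddle-point proof and otherwise to rely on \cite{RSS78} or \cite[Lemma~2.2]{SS-book}. First I would convert the pointwise cumulant bounds into control of the cumulant generating function $\Lambda_Z(t)=\log\EE e^{tZ}$. Since $Z$ is centered with unit variance, $\gamma_1(Z)=0$ and $\gamma_2(Z)=1$, so formally $\Lambda_Z(t)=\frac{t^2}{2}+\sum_{m=3}^{\infty}\frac{\gamma_m(Z)}{m!}\,t^m$; the hypothesis $|\gamma_m(Z)|\le (m-2)!\,\Theta^{-(m-2)}$ for $3\le m\le s+2$ bounds the tail of this series by $\sum_{m\ge 3}\frac{|t|^m}{m(m-1)\,\Theta^{m-2}}$, which converges for $|t|<\Theta$ and is of order $|t|^3/\Theta$ there. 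This yields estimates of the form $\bigl|\Lambda_Z(t)-\tfrac{t^2}{2}\bigr|\le \frac{C|t|^3}{\Theta}$ and $\bigl|\Lambda_Z'(t)-t\bigr|\le \frac{C|t|^2}{\Theta}$ on a disc of radius comparable to $\Theta$, so that the saddle-point equation $\Lambda_Z'(\tau)=x$ has a unique root $\tau=\tau(x)=x+O(x^2/\Theta)$ for $|x|$ up to order $\Theta$; the hypothesis $s\le 2\Theta^2$ is what keeps the admissible range of $x$ inside this disc.

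Next I would run the exponential change of measure $\mathrm{d}\PP_\tau=e^{\tau Z-\Lambda_Z(\tau)}\,\mathrm{d}\PP$, under which $Z$ has mean $\Lambda_Z'(\tau)=x$ and variance $\Lambda_Z''(\tau)=1+O(x/\Theta)$, and write
\[
	\PP(Z>x)=e^{\Lambda_Z(\tau)-\tau x}\,\EE_\tau\!\bigl[e^{-\tau(Z-x)}\ind_{\{Z>x\}}\bigr].
\]
Inserting the expansion from the first step, the exponent becomes $\Lambda_Z(\tau)-\tau x=-\frac{x^2}{2}+L(x)$, where $L(x)$ is (a truncation of) the Cram\'er series of $Z$; estimating this series term by term against the cumulant bounds gives $|L(x)|\le \frac{5|x|^3}{4\Theta}$ on $|x|<\frac{\sqrt2\,\Theta}{3\sqrt e}$. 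It then remains to compare the non-exponential factor $\EE_\tau[e^{-\tau(Z-x)}\ind_{\{Z>x\}}]$ with $\PP(G>x)\,e^{x^2/2}$; this is a quantitative local-limit / Berry--Esseen estimate for $Z$ under $\PP_\tau$, and the point that only the cumulants up to order $s+2$ are controlled (rather than all of them) is exactly what forces the correction factor $1+f(x)\frac{x+1}{\sqrt s}$ with $f$ of the stated form, the truncation error contributing the $1/\sqrt s$ and the displayed bound on $f$.

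The hard part will be this last step together with the bookkeeping of explicit constants: getting the \emph{sharp} remainder --- the precise bound on $f$ and the constant $\tfrac{5}{4}$ in $|L(x)|\le \frac{5|x|^3}{4\Theta}$ --- requires a careful quantitative version of the saddle-point method with uniform control of $\Lambda_Z$ and its first few derivatives on the relevant disc, and this is precisely what is worked out in \cite{RSS78}. Since none of our applications need anything beyond the stated bound, I would simply invoke the lemma as a black box.
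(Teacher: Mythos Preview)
Your proposal is correct and aligned with the paper: the paper does not prove this lemma at all but simply quotes it from \cite{RSS78} and \cite[Lemma~2.2]{SS-book} and uses it as a black box, exactly as you conclude in your final sentence. Your sketch of the saddle-point/tilting mechanism is accurate and more than the paper itself provides.
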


	We shall now use Lemma~\ref{lem:cum-tails} to prove the following proposition in the spirit of \cite[Theorem~1.1]{DE13} by D{\"o}ring and Eichelsbacher. This is the last tool we need in the proof of Theorem~\ref{thm:integer-ratios}.
	\begin{proposition}	\label{prop:cum-MDP}
		Let $\beta>0$, $\gamma \ge0$, and $ 0<\Delta_n \le n^\beta$ for  sufficiently large $n\in \NN$.
		Assume that $(b_n)_{n\in\NN}$ is an increasing sequence of positive numbers converging to infinity slow enough to ensure that
	\begin{equation*}
	\frac{b_n  (\log n)^\gamma}{\Delta_n} \mathop{\longrightarrow}^{n\to \infty} 0.
	\end{equation*}
	 Let $(S_n)_{n\in\NN}$ be a sequence of centered random variables with finite moments of all orders and $\sigma_n= (\EE S_n^2)^{1/2}$. If 
	 \begin{equation}
	|\gamma_m(S_n)|\le \frac{m^{m-2} (\log m)^{\gamma m}\sigma_n^m}{\Delta_n^{m-2}}
	\label{eq:cond-DE}
	\end{equation}
	holds for all $m\in \{3,4,\ldots\}$ and all sufficiently large $n\in \NN$, then $(\frac{S_n}{\sigma_nb_n})_{n\in\NN}$ satisfies an MDP with speed $b_n^2$ and rate function $I(t)=t^2/2$.
	\end{proposition}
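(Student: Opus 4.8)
The plan is to normalise $S_n$, feed the cumulant estimate \eqref{eq:cond-DE} into the Rudzkis--Saulis--Statuljavi\v{c}us tail bound of Lemma~\ref{lem:cum-tails} with a well-balanced choice of its two parameters, and then read the MDP off the resulting sharp tail asymptotics. Put $Z_n\coloneqq S_n/\sigma_n$, a centred random variable of variance $1$; by the homogeneity of cumulants, \eqref{eq:cond-DE} gives $|\gamma_m(Z_n)|=|\gamma_m(S_n)|\sigma_n^{-m}\le m^{m-2}(\log m)^{\gamma m}\Delta_n^{-(m-2)}$ for all $m\ge 3$ and all large $n$. Since $S_n/(\sigma_nb_n)=Z_n/b_n$ and $b_n^2\to\infty$, and since $I(t)=t^2/2$ has compact sublevel sets, it is enough --- by the standard reduction of a real-line MDP to the two half-line estimates, and since $|\gamma_m(-Z_n)|=|\gamma_m(Z_n)|$ makes the left tail a mirror image of the right one --- to prove that for every fixed $t>0$, $\frac1{b_n^2}\log\PP(Z_n>tb_n)\to-t^2/2$ as $n\to\infty$.

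To set up Lemma~\ref{lem:cum-tails} I would calibrate its parameters $\Theta$ and $s$ as follows. From $(m-2)!\ge\big((m-2)/e\big)^{m-2}$ and $(1+2/(m-2))^{m-2}\le e^2$ one gets the elementary inequality $m^{m-2}\le e^m(m-2)!$ for all $m\ge 3$, so the required bound $|\gamma_m(Z_n)|\le(m-2)!\,\Theta^{-(m-2)}$ holds as soon as $\Theta/\Delta_n\le\big(e^m(\log m)^{\gamma m}\big)^{-1/(m-2)}$; for $3\le m\le s+2$ the right-hand side is at least $c_\gamma(\log(s+2))^{-\gamma}$ with $c_\gamma>0$ depending only on $\gamma$, because $e^{-m/(m-2)}\ge e^{-3}$ and $(\log m)^{-\gamma m/(m-2)}=(\log m)^{-\gamma}(\log m)^{-2\gamma/(m-2)}$ with $(\log m)^{-2\gamma/(m-2)}$ bounded below by a positive constant for $m\ge 3$. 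Using $\Delta_n\le n^\beta$, so that every admissible $s\le 2\Theta^2\le 2\Delta_n^2$ satisfies $\log(s+2)\le 3\beta\log n$ for large $n$, I then set
\[
 \Theta_n\coloneqq\frac{\Delta_n}{K(\log n)^\gamma},\qquad s_n\coloneqq\lfloor 2\Theta_n^2\rfloor,
\]
with $K=K(\beta,\gamma)$ chosen large enough that the cumulant condition of Lemma~\ref{lem:cum-tails} holds for all $m\in\{3,\dots,s_n+2\}$ and all large $n$. This is admissible: the hypothesis $b_n(\log n)^\gamma/\Delta_n\to0$ gives $\Theta_n/b_n\to\infty$, hence (as $b_n\to\infty$) $\Theta_n\to\infty$ and $1\le s_n\le 2\Theta_n^2$ eventually, while $\sqrt{s_n}\ge\Theta_n$ for large $n$, so $\sqrt{s_n}/b_n\ge\Theta_n/b_n\to\infty$ and thus for each fixed $t>0$ the requirement $0\le tb_n<\sqrt{s_n}/(3\sqrt e)$ is met for all large $n$.

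Applying Lemma~\ref{lem:cum-tails} to $Z_n$ with these parameters and $x=tb_n$ yields, for all large $n$,
\[
 \PP(Z_n>tb_n)=\PP(G>tb_n)\,e^{L(tb_n)}\Big(1+f(tb_n)\tfrac{tb_n+1}{\sqrt{s_n}}\Big),
\]
where $G\sim\mathcal N(0,1)$, $|L(tb_n)|\le\frac{5(tb_n)^3}{4\Theta_n}$ (permissible since $tb_n<\sqrt{s_n}/(3\sqrt e)\le\sqrt2\,\Theta_n/(3\sqrt e)$), and $f$ is controlled by the bound in the lemma. Dividing $\log(\cdot)$ by $b_n^2$ and letting $n\to\infty$: the Mills ratio gives $\frac1{b_n^2}\log\PP(G>tb_n)=-\frac{t^2}2+O(\tfrac{\log b_n}{b_n^2})\to-\frac{t^2}2$; next $\frac{|L(tb_n)|}{b_n^2}\le\frac{5t^3b_n}{4\Theta_n}\to0$ because $\Theta_n/b_n\to\infty$; and since $3\sqrt e\,tb_n/\sqrt{s_n}\to0$ while $s_n\to\infty$ keeps $f(tb_n)=O(1)$ (the numerator in the bound for $f$ tends to $117$), the last factor equals $1+O(tb_n/\sqrt{s_n})\to1$. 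Hence $\frac1{b_n^2}\log\PP(Z_n>tb_n)\to-t^2/2$, which together with the analogous left-tail limit completes the proof.

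The step I expect to be the crux is the \emph{simultaneous} tuning of $\Theta_n$ and $s_n$. One wants $s_n$ of the largest admissible order $\asymp\Theta_n^2$, so that both $tb_n$ and the error term $\cdot/\sqrt{s_n}$ sit in the safe regime; but then the cumulant bound must survive up to order $s_n+2$, which caps $\Theta_n$ at roughly $\Delta_n/(\log s_n)^\gamma$. The two standing hypotheses close this loop precisely: $\Delta_n\le n^\beta$ turns $\log s_n$ into $O(\log n)$, and then $b_n(\log n)^\gamma/\Delta_n\to0$ forces $\Theta_n\gg b_n$, which is exactly what makes the $L$-term negligible on the scale $b_n^2$. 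The remaining ingredients --- cumulant homogeneity, the inequality $m^{m-2}\le e^m(m-2)!$, the Gaussian tail asymptotics, and the passage from the two half-line limits to the full Borel MDP --- are routine.
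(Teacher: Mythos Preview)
Your proposal is correct and follows essentially the same route as the paper's proof: normalise to variance one, choose $\Theta_n\asymp\Delta_n/(\log n)^\gamma$ and $s_n\asymp\Theta_n^2$, use Stirling in the form $m^{m-2}\le e^m(m-2)!$ together with $\Delta_n\le n^\beta$ (so that $\log s_n=O(\log n)$) to verify the cumulant hypothesis of Lemma~\ref{lem:cum-tails} up to order $s_n+2$, and then read off the half-line asymptotics term by term. The only cosmetic differences are that the paper tracks explicit constants (taking $\Theta_n=\Delta_n/(3e^{1+2\gamma}(2\beta\log n)^\gamma)$), obtains the tail estimate uniformly in $x$ on a growing window before specialising, cites the Komatsu inequality for the Gaussian tail, and appeals to \cite[Theorem~4.1.11 and Lemma~1.2.18]{DZ-book} for the passage from half-line limits to the full MDP; your pointwise argument and invocation of Mills' ratio are equivalent.
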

	\begin{proof}
		Since the $m$-th cumulant is $m$-homogeneous, $\gamma_m(S_n \sigma_n^{-1}) = \sigma_n^{-m}\gamma_m(S_n)$,  so we may and will assume that $\sigma_n =1$ for all $n\in\NN$. Let $\Theta_n = \frac{ \Delta_n }{3e^{1+2\gamma}(2\beta\log n)^\gamma} $ and $s_n= 2\Theta_n^2$.
	We shall consider only $n\ge 2$, so $\Theta_n$ is finite.
	
	Since $\frac{b_n  (\log n)^\gamma}{\Delta_n} \to 0$ and $b_n\to \infty$ as $n\to \infty$, we have for sufficiently large $n\in\NN$ that $3e^{1+2\gamma}(2\beta\log n)^\gamma<\Delta_n$.	
Now we combine assumption \eqref{eq:cond-DE} with the loose Stirling bound $k! e^k>k^k$ to show that for all $m\in\{3, 4, \ldots, n^{2\beta} \}$ and sufficiently large $n\in \NN$,
	\begin{align*}
			|\gamma_m(S_n)| 
			& \leq   \frac{m^{m-2} (\log m)^{\gamma m}}{\Delta_n^{m-2}} 
			 \leq  \frac{m^{m-2} (\log m)^{2 \gamma} (2\beta \log n)^{\gamma (m-2)}}{\Delta_n^{m-2}} \cr
			& \le \frac{ e^{m-2} (m/(m-2))^{m-2} (m-2)! (\log m)^{2\gamma}  (2\beta \log n)^{\gamma (m-2)}}{\Delta_n^{m-2}}  \cr
			 & \le \frac{(m-2)! (3e^{1+2\gamma})^{m-2} (2\beta\log n)^{\gamma (m-2)}}{\Delta_n^{m-2}}
			  = \frac{(m-2)!}{\Theta^{m-2}}.
	\end{align*}
Since we have $3e^{1+2\gamma}(2\beta\log n)^\gamma<\Delta_n \le n^\beta$ for sufficiently large $n$,  $s_n>2$ and  $\Theta_n \leq \frac{n^\beta}{2}$, so  $s_n=2\Theta_n^2 \leq \frac{n^{2\beta}}{2}$, which implies 
		\[
		  s_n+2 < 2s_n \leq n^{2\beta};
		\]
this means that our upper bound on $|\gamma_m(S_n)|$ holds for all $m\in\{3,4,\ldots, s_n+2\}$.		
Therefore, the assumptions of Lemma~\ref{lem:cum-tails} are satisfied with  $Z=S_n$, $s=s_n$, and $\Theta=\Theta_n$. 
Moreover, since $b_n  (\log n)^\gamma/\Delta_n \rightarrow 0$, we have that
$b_n / \Theta_n \to 0$. Recall also our assumption that $b_n\to \infty$. Thus, for every sufficiently large $n\in\NN$ and every $0\le x\le \frac{\Theta_n}{10 b_n}<0.9 \frac{\sqrt{s_n}}{3\sqrt eb_n}$  we have 
		\begin{align*}
			\frac{1}{b_n^2} \log \PP \biggl( \frac{S_n}{b_n} > x \biggr) 
			& = \frac{1}{b_n^2} \log \PP \biggl(\frac{G}{b_n} > x \biggr) 
			+  \frac{1}{b_n^2} \biggl[ L(xb_n) + \log\Bigl (1+ f(xb_n)\frac{xb_n+1} {\sqrt{s_n}} \Bigr) \biggr]
			\\ & =- \frac{x^2}2 +  \log\bigl(xb_n+2\bigr)O\Bigl(\frac1{b_n^2}\Bigr)+ \frac{1}{b_n^2} \biggl[ x^3 O\biggl(\frac{b_n^3}{\Theta_n}\biggr) +O(1)  \biggr]
			\\ & = -\frac{x^2}2 +o(1)\Bigl( \log\bigl(x+2\bigr)+x^3+1\Bigr),
		\end{align*}
		where in the second equality we used the bound 
		\[
			\log \PP \bigl(G> y \bigr) = - \frac{y^2}2 + \log\bigl(y+2\bigr)O(1) \qquad \text{for every }y>0,
		\]
		 which follows from the Komatsu inequality \cite{Komatsu}. Moreover, the sequence $( \frac{\Theta_n}{10 b_n})_{n\in\NN}$ of elements dominating $x$ goes to $\infty$ when $n\to \infty$. Hence, for all $x\ge 0$, we obtain
		\[
			\frac{1}{b_n^2} \log \PP \biggl(  \frac{S_n}{b_n} > x \biggr) \mathop{\longrightarrow}^{n\to \infty} -\frac{x^2}2.
		\]
		Since $(-S_n)_{n\in\NN}$ satisfies the same assumptions as $(S_n)_{n\in\NN}$, we also have that for every $x\ge0$, 
		\[
			\frac{1}{b_n^2} \log \PP \biggl( \frac{S_n}{b_n} < -x \biggr)
			= \frac{1}{b_n^2} \log \PP \biggl( - \frac{S_n}{b_n} > x \biggr) \mathop{\longrightarrow}^{n\to \infty} -\frac{x^2}2.
		\]
		Thus, for every interval $J\subseteq \RR$,
		\[
			\frac{1}{b_n^2} \log \PP \biggl( \frac{S_n}{b_n} \in J \biggr)
			 \mathop{\longrightarrow}^{n\to \infty}  - \inf_{x\in J} \frac{x^2}2.
		\]
		This means that $(S_n/b_n)_{n\in\NN}$ satisfies an MDP with speed $b_n^2$ and rate function $I(t)=t^2/2$ --- this classical fact follows from \cite[Theorem~4.1.11 and Lemma~1.2.18]{DZ-book}, applied to the family of measures 
		\[
			\mu_\eps(B) \coloneqq \PP(\sqrt \eps S_{n(\eps)} \in B),
		\] 
		where $n(\eps)\coloneqq \max \{n\in \NN \colon b_n^2\le \eps^{-1} \}$. This completes the proof.
	\end{proof}
	
	We are now ready to present the proof of Theorem~\ref{thm:integer-ratios}.
	
	\begin{proof}[Proof of Theorem~\ref{thm:integer-ratios}]
Let $S_n=S_n^f$.
Since $f$ is a trigonometric polynomial, it is bounded, so there exists $A>0$ such that for every $n\in \NN$, $|X_n|=|f(a_nU)|\le A$ a.s. 
For an arbitrary $n\in\NN$, we apply Theorem~\ref{thm:corr-graphs} with  $Z_v=X_i$, $W=\{1,\ldots, n\}$, and the correlation graph described in Proposition~\ref{prop:graph-construction} to obtain that there exists a constant $C=C(A,D)\ge 1$  such that, for every   $m\ge 3$ and  $n \in \NN$,
	\begin{align*}
		|\gamma_m (S_n)|
		& \le 2(2m)^{m-2} n (2\log (mD) +3)^{m-1}A^m 
		 \le   m^{m-2}  C^{m-2} (\log m)^{m} n \cr
		& \hspace{-3pt} \mathop{\le}^{\eqref{condition-var}} \frac{m^{m-2} (\log m)^m \bigl(\sqrt{ \EE S_n^2}\bigr)^m}{ \bigl(C^{-1} \sqrt {\EE S_n^2}\bigr)^{m-2} \min\{1,c(f,a)\}} 
		 \le \frac{m^{m-2} (\log m)^m \bigl(\sqrt{ \EE S_n^2}\bigr)^m}{ \bigl(C^{-1} \min\{1,c(f,a)\} \sqrt{ \EE S_n^2}\bigr)^{m-2} } .
	\end{align*}
Therefore, the cumulant bound \eqref{eq:cond-DE} of Proposition \ref{prop:cum-MDP} is satisfied with $\gamma=1$ and $\Delta_n=\frac{\min\{1,c(f,a)\}\sqrt{\EE S_n^2}}{C}$.
It follows  from Remark~\ref{rmk:var-upper-bound} (which shows that the reverse bound to \eqref{condition-var} always holds) that  $\Delta_n=O(\sqrt n)$, so $\Delta_n\le n$ for sufficiently large $n$.
Now we choose $b_n=\sqrt { z_n \EE S_n^2}$, $n\in\NN$. 
Then, using assumptions \eqref{condition-var} and $z_nn\to\infty$ as $n\to\infty$, we obtain 
        \[
           b_n \ge \sqrt{c(f,a) n z_n}  \ \mathop{\longrightarrow}^{n\to \infty} \infty.
        \]   
Moreover, because we assume $z_n(\log n)^2 \to 0$ as $n\to\infty$, we also have
	\[
		\frac{b_n \log n }{\Delta_n} 
		=\sqrt{ z_n} \log n\frac{ C}{ \min\{1,c(f,a)\}} \ \mathop{\longrightarrow}^{n\to \infty} 0.
	\]
Hence, Proposition~\ref{prop:cum-MDP} applied with $\beta=1=\gamma$ yields that the sequence 
	 $
	   (\frac{S_n}{ b_n\sqrt{\EE S_n^2}})_{n\in\NN}=(\frac{S_n}{ \sqrt{z_n} \EE S_n^2})_{n\in\NN}=(\frac{S_n}{ y_n})_{n\in\NN}
	 $
satisfies an MDP  with speed $b_n^2= z_n \EE S_n^2=x_n$ and rate function $I(t)=t^2/2$. This completes the proof.
\end{proof}
	
	\begin{remark} 	\label{rmk:no-mod-G-for-free}
		Although Lemma~\ref{lem:cum-tails} delivers precise tail bounds similar to the mod-Gaussian convergence (see \cite[Section~5.3]{Mod-phi-book} for more details), it is not sufficient for providing mod-Gaussian convergence in the setting of Theorem~\ref{thm:integer-ratios}, since our bounds on cumulants are precise only up to logarithm, whereas in order to have mod-Gausssian convergence we should prove sharp bounds on cumulants, i.e., without a~logarithmic term in \eqref{eq:cond-DE}.
		This surely does not mean that the mod-Gausssian convergence does not hold in the setting of Theorem~\ref{thm:integer-ratios}; our methods simply do not work in this case.
	\end{remark}
	
	\begin{proof}[Proof of Proposition~\ref{prop:conds-implying-var}]
		Since $f\not\equiv 0$, we have $\frac 12 \EE X_1^2=\sum_{d=0}^D c_d^2 >0$.
		
		\vskip 2mm
		
		\textbf{ Assume that condition \ref{item:cond-c_d-nonnegative} holds.} Then identity \eqref{eq:moms-Sn} implies that 
		\begin{align*}
			\EE S_n^2 & = 2n \sum_{d=1}^D c_d^2 
		\hspace{1pt}+ \hspace{2pt}2 \hspace{-3pt} \sum_{\substack { d,d'\in \{1,\ldots, D \} \\ d\neq d'}} \hspace{-3pt} c_d c_{d'} \# \Bigl\{ (\ell,\ell') \in \{1,\ldots , n \}^2 \colon da_\ell = d'a_{\ell'} \Bigr\} \ge 2n \sum_{d=1}^D c_d^2,
		\end{align*}
		so \eqref{condition-var} is satisfied with a constant $c(f,a) = 2\sum_{d=0}^D c_d^2>0$ depending only on $f$.
		
		\textbf{ Assume that condition \ref{item:cond-number-of-solutions} holds.} Then  identity \eqref{eq:moms-Sn} implies that 
		\begin{align*}	
			\nonumber 
			\EE S_n^2 
			 & = 2n \sum_{d=1}^D  c_d^2 
		 + 2 \sum_{\substack { d,d'\in \{1,\ldots, D \} \\ d\neq d'}} c_d c_{d'} \# \Bigl\{ (\ell,\ell') \in \{1,\ldots , n \}^2 \colon da_\ell = d'a_{\ell'} \Bigr\}
		\\ \nonumber
		  & =  2n \sum_{d=1}^D  c_d^2 
		+ 2 \sum_{ \substack { d,d'\in \{1,\ldots, D \} \\ d\neq d'} } c_d c_{d'} \# \Bigl\{ (\ell,\ell') \in \{1,\ldots , n \}^2 \colon da_\ell = d'a_{\ell'}, \ \ell\neq \ell' \Bigr\}
		\end{align*}
		\begin{align}
		\label{eq:var-pom}
	 &\ge 2n \sum_{d=1}^D c_d^2 - 2D(D-1) \max_{1\le d\le D}c_d^2\, o(n) = n\EE X_1^2 - o(n) \ge c(f,a) n.
		\end{align}

		\textbf{ Assume that condition \ref{item:cond-a_n-bigger-D} holds.} 
		Then, for every $d,d' \in \{1,\ldots ,D\}$  and all integers $\ell \ge k_0$ and $\ell'< \ell$, we have $da_\ell > dD a_{\ell'} \ge d'a_{\ell'}$.
		Hence, for every $d,d' \in \{1,\ldots ,D\}$ we have
		\begin{multline*}
			\# \Bigl\{ (\ell,\ell') \in \{1,\ldots , n \}^2 \colon da_\ell = d'a_{\ell'}, \ l\neq l' \Bigr\} 
			\\ = \# \Bigl\{ (\ell,\ell') \in \{1,\ldots , k_0 - 1\}^2 \colon da_\ell = d'a_{\ell'} , \ \ell\neq \ell' \Bigr\}  \le (k_0-1)^2 = o(n).
		\end{multline*}
		Therefore, condition  \ref{item:cond-number-of-solutions} is satisfied and   the assertion follows from the previous paragraph. 
	\end{proof}
	
	We shall now present the proof of our second main result dealing with the case of  large gaps.
	
	\begin{proof}[Proof of Theorem~\ref{thm:MDP-large-gap}]
		Let us write $S_n=S_n^f$, $n\in\NN$.
		We begin with the proof of
		the MDP. By the G{\"a}rtner-Ellis theorem (see, e.g., \cite[Theorem 2.3.6]{DZ-book}) it suffices to prove that for every $\theta\in\RR$,
		\begin{equation}	\label{eq:aim-GE}
			 \lim_{n\to \infty } \frac{1}{nz_n} \log \EE e^{\theta  \sqrt{z_n} S_n} 
			 =\lim_{n\to \infty } \frac{\EE X_1^2}{x_n} \log \EE e^{\theta  x_n \frac{S_n}{y_n}} 
			 = \frac{\theta^2\EE X_1^2}2.
		\end{equation}
We shall show that the convergence is locally uniform on $\RR$. To this end, let us fix $T>0$ and consider $|\theta|\le T$. Every time we write $o(\cdot)$, $O(\cdot)$, this should be understood as uniform on the interval $[-T,T]$.
		
Since $\frac{a_{n+1}}{a_n} \to \infty$, there exists $k_0\in\NN$ such that  
		\begin{equation}\label{eq: estimate successive an}
		   a_{n+1}\ge 4Da_n \qquad \text{for every } n\ge k_0,
		\end{equation}
where $D\in\NN$ is the degree of the trigonometric polynomial $f$.	 
Thus, for every  $\widetilde{\ell}\in \NN$, $\widetilde{d}_1,\ldots, \widetilde{d}_{\widetilde{\ell}} \in\{-2D,\ldots, 2D\}\setminus\{0\}$, and a set of pairwise distinct integers  $ j_1, \ldots, j_{\widetilde{\ell}}> k_0$ (for which we denote $n_0\coloneqq j_{r_0}=\max_{1\leq r \leq \widetilde{\ell}} j_r$, with $r_0\in\{1,\dots, \widetilde{\ell}\}$), we have
		\begin{align*}
			|a_{j_{r_0}}  \widetilde{d}_{r_0}|
			& \ge a_{n_0} \stackrel{\eqref{eq: estimate successive an}}{\ge} 4D a_{n_0-1} 
			> 2D  \frac{a_{n_0-1}}{1-(4D)^{-1}} 
			> 2D   \sum_{i=0}^{n_0-1-k_0} \frac{a_{n_0-1}}{(4D)^i}
			\cr &
			\ge 2D \sum_{1\le r\le \widetilde{\ell}, r\neq r_0} \frac{a_{n_0-1}}{(4D)^{n_0-1-j_r}}
			\stackrel{\eqref{eq: estimate successive an}}{\ge} 2D \sum_{1\le r\le \widetilde{\ell}, r\neq r_0} a_{j_r}
			\cr &
			\ge  \sum_{1\le r\le \widetilde{\ell}, r\neq r_0} a_{j_r}  |\widetilde{d}_r| 
			\ge \Bigl| \sum_{1\le r\le \widetilde{\ell}, r\neq r_0} a_{j_r}  \widetilde{d}_r \Bigr|.
		\end{align*}
		Hence, there are no solutions to the equation $\sum_{r=1}^{\widetilde{\ell}} \widetilde{d}_r a_{j_r}=0$ with $\widetilde{d}_r$'s and $j_r$'s as above. This means that for a fixed $\ell\in\NN$ every solution to the equation $\sum_{r=1}^\ell d_r a_{j_r}=0$  in  integers $d_1,\dots,d_\ell \in \{-D,\ldots, D\}$ and $j_1,\ldots, j_\ell > k_0$ such that $\#\{r\le \ell\colon j_r=m\} \in \{0,1,2\}$ for every $m\in \NN$ must actually satisfy $\#\{r\le \ell\colon j_r=m\}\in \{0,2\}$ for every $m\in\NN$, and $d_{r}=-d_{p}$ whenever $j_r=j_p$ and $r\neq p$. 
		Thus, 
		formula \eqref{eq:expectation-solutions} implies that for every collection of $\delta_1,\ldots, \delta_\ell\in\{1,2\}$ and pairwise distinct integers $j_1,\ldots, j_\ell > k_0$,
		\begin{equation}
		\label{eq:MDP-large-gaps-pom1}
			\EE (X_{j_1}^{\delta_1} \cdots X_{j_\ell}^{\delta_\ell}) = \ind_{\{\delta_1=\ldots=\delta_\ell=2\}} \EE X_{j_1}^{\delta_1} \cdots \EE X_{j_\ell}^{\delta_\ell} = \ind_{\{\delta_1=\ldots=\delta_\ell=2\}} (\EE X_1^2)^\ell.
		\end{equation}
		Moreover \eqref{eq: estimate successive an} implies that for every $d,d'\in \{-D,\ldots, D\}$ there are no solutions to the equation $da_k=d'a_\ell$ satisfying $\max\{k,\ell\}> k_0$ and $k\neq \ell$, so condition \ref{item:cond-number-of-solutions} holds even with $O(1)$ instead of $o(n)$.
		
		Since $f$ is bounded, there exists  $A>0$ such that $|X_j|\le A$ a.s. Thus,
		 Taylor's formula for the exponential function and equation \eqref{eq:MDP-large-gaps-pom1}  imply (we write pw for pairwise)
		\begin{align*}
			\EE  &e^{\theta\sqrt{z_n}(S_n-S_{k_0})} 
			= \EE \prod_{j=k_0+1}^n e^{\theta\sqrt{z_n} X_j}
			= \EE \prod_{j=k_0+1}^n \Bigl( 1+\theta\sqrt{z_n}X_j +\frac{\theta^2z_n}2 X_j^2 + O(|z_n|)^{3/2} \Bigr)
			\\
			& =  \sum_{ \substack{k,\ell,m\in\{0,1,\ldots\} \\ k+\ell+m\le n-k_0}} \theta^{k+2\ell}z_n^{k/2 +\ell} 2^{-\ell} O(|z_n|)^{3m/2} {n-k_0-k-\ell \choose m} 
			\\
			& \hspace{1,5cm} \cdot \sum_{\substack{i_1,\ldots, i_k \in \{k_0+1,\ldots, n\} \\ \text{pw. distinct}}}\sum_{\substack{j_1,\ldots, j_\ell \in \{k_0+1,\ldots, n\} \\ \text{pw. distinct}}} \EE (X_{i_1}\cdots X_{i_k}X_{j_1}^2\cdots X_{j_\ell}^2)
			\\
			& \hspace{-7pt}\stackrel{\eqref{eq:MDP-large-gaps-pom1}}{=}  \sum_{ \substack{\ell,m\in\{0,1,\ldots\} \\ \ell+m\le n-k_0}} \Bigl(\frac{\theta^2z_n}2\Bigr)^{\ell} O(|z_n|)^{3m/2} {n-k_0-\ell \choose m} 
			\sum_{\substack{j_1,\ldots, j_\ell \in \{k_0+1,\ldots, n \}\\ \text{pw. distinct}}} \EE X_{j_1}^2\cdots \EE X_{j_\ell}^2
			\\ & =  \sum_{ \substack{\ell,m\in\{0,1,\ldots\} \\ \ell+m\le n-k_0}}   {n-k_0 \choose \ell}  {n-k_0-\ell \choose m}
 \Bigl(\frac{\theta^2z_n\EE X_1^2}2\Bigr)^{\ell} O(|z_n|)^{3m/2}.
 		\end{align*}	
		Moreover, we have
		\begin{align*}
			 e^{(n-k_0)\theta^2z_n  \EE X_1^2 /2}  &=  \biggl( 1+\frac{\theta^2z_n\EE X_j^2}2 + O(z_n^{2}) \biggr)^{n-k_0}
			\cr
			&= \sum_{ \substack{\ell,m\in\{0,1,\ldots\} \\ \ell+m\le n-k_0}}   {n-k_0 \choose \ell}  {n-k_0-\ell\choose m}
 \Bigl(\frac{\theta^2z_n\EE X_1^2}2\Bigr)^{\ell} O(|z_n|)^{2m}.
		\end{align*}
		 Since $z_n\to 0$ as $n\to\infty$, we therefore obtain
		 \begin{align*}	
		 	& \EE  e^{\theta\sqrt{z_n}(S_n-S_{k_0})}  \cr
			&= e^{(n-k_0)\theta^2 z_n  \EE X_1^2 /2} 
			+  \sum_{ \substack{\ell,m\in\{0,1,\ldots\} \\ \ell+m\le n-k_0}}   {n-k_0 \choose \ell}  {n-k_0-\ell \choose m}
 \Bigl(\frac{\theta^2z_n\EE X_1^2}2\Bigr)^{\ell} O(|z_n|)^{3m/2}
 			\\ &=e^{(n-k_0)\theta^2 z_n \EE X_1^2 /2}  +  \Bigl( 1+\frac{\theta^2z_n\EE X_1^2}2 + O(|z_n|)^{3/2} \Bigr)^{n-k_0}.
		 \end{align*}
		 Assumptions $|X_i|\le A$ a.s.,  $nz_n\to \infty$ and $z_n\to 0$ as $n\to\infty$, and the equation above yield
		 \begin{align*}
		 	  \frac1{nz_n} \log \EE e^{\theta \sqrt{z_n}S_n} 
			& = \frac1{nz_n} \log \EE (e^{\theta\sqrt{z_n}S_{k_0}}e^{\theta \sqrt{z_n}(S_n - S_{k_0})})  
			\cr &
			= \frac{1}{nz_n} \log \big( \EE e^{\theta \sqrt{z_n}S_n-{S_{k_0}}} \big) +o(1)
			\cr
			& = 
			\frac{n-k_0}{nz_n} \max\Bigl\{\frac{\theta^2 z_n \EE X_1^2}2
			,\log\Bigl( 1+\frac{\theta^2z_n\EE X_1^2}2 + O(|z_n|)^{3/2}\Bigr) \Bigr\}
			+o(1)
			\cr &
			= \frac{\theta^2\EE X_1^2}2 +o(1),
		 \end{align*}
		 so the desired identity \eqref{eq:aim-GE} holds.
		 
		 Now we move on to the proof of the identity $\EE S_n^2 = n\EE X_1^2 + O(1)$. By \eqref{eq:var-pom} it suffices to prove that condition \ref{item:cond-number-of-solutions} holds with $O(1)$ instead of $o(n)$, what we have shown before in this proof. 	
		 \end{proof}

\section{Mod-Gaussian convergence}	\label{sect:mod-G}

The theory of mod-$\phi$ convergence is a rather recent natural extension of the previously investigated types of convergence for some sequences of random variables. This notion of convergence encodes quite a lot of information reaching from local limit theorems up to moderate and even large deviations. It has turned out to be a powerful tool in analytic number theory, random matrix theory, and probability theory and we refer the reader to the monograph \cite{Mod-phi-book} by F\'eray, M\'eliot, and Nikeghbali for more information and additional references. In this section we are interested in a special type of mod-$\phi$ convergence, where $\phi$ is the standard Gaussian distribution. In this case we speak of mod-Gaussian convergence.

For the convenience of the reader, we present the definition; by $\mathfrak{R}(z)$ we shall denote the real part of a complex number $z$.

\begin{definition}
Let $(X_n)_{n\in\NN}$ be a sequence of real-valued random variables and assume that their moment generating functions $\varphi_n(z)\coloneqq \EE e^{zX_n}$ exist in a strip
    \[
      S_{(c,d)} \coloneqq \{ z\in \CC \,:\, \mathfrak{R}(z) \in (c,d) \}
    \] 
for some $c<d$, where we allow $c=-\infty$ and $d=+\infty$.  We assume further that there exists a non-constant infinitely divisible probability distribution $\phi$ with moment generating function 
    \[
      \int_{\RR} e^{zx} \phi(\text{d}x) = e^{\eta(z)}
    \]
that is well-defined on $S_{(c,d)}$, and an analytic function $\psi$ that does not vanish on $\mathfrak R(S_{(c,d)})$ such that the following holds: there exists a sequence $(t_n)_{n\in\NN}$ of real numbers converging to $+\infty$ such that
    \[
      e^{-t_n \eta(z)} \varphi_n(z) \stackrel{n\to\infty}{\longrightarrow} \psi(z) \qquad\text{locally uniformly on $S_{(c,d)}$}.
    \]   
We then say that the sequence $(X_n)_{n\in\NN}$ converges mod-$\phi$ on $S_{(c,d)}$ with parameters $(t_n)_{n\in\NN}$ and limiting function $\psi$.  If $\phi$ is the standard normal distribution, then $\eta(z)=z^2/2$, and we speak of mod-Gaussian convergence.
\end{definition}

We are now going to prove mod-Gaussian convergence for appropriately normalized partial sums $(S_n^f)_{n\in\NN}$ under some additional growth assumptions on the lacunary sequence $(a_n)_{n\in\NN}$.

Recall that $\gamma_m(X_1)$ is the $m$-th cumulant of the random variable $X_1$. 
Since we assume that $c_0=0$, $\gamma_1(X_1)=0$ and $\gamma_2(X_1)=\EE X_1^2 = \sum_{d=-D}^D c_d^2 >0$.
Let $\rho \ge 3$ be an integer such that 
	\[
	\gamma_3(X_1),\ldots ,\gamma_{\rho-1}(X_1)=0 \quad \text{and}\quad \gamma_{\rho}(X_1)\neq 0.
	\]
The rate of mod-Gaussian convergence depends on the parameter $\rho$. Note that in the classical case when $X_n = \cos(2\pi a_n U)$, $n\in\NN$, we have $\rho=4$ and 
      $
         \gamma_4(X_1)=\EE X_1^4 - 3(\EE X^2)^2 =3/8-3/4= -3/8.
      $  
		
We are able to prove the Mod-Gaussian convergence for lacunary sequences satisfying one of the following growth conditions:
	\begin{gather}
		\frac{a_{n+1}}{n^{\rho-1} a_{n}} \mathop{\longrightarrow}^{n\to \infty} \infty,
		\tag{Condition 1}
		\label{condition1}
	\\
		\frac{a_{n+1}}{a_{n}}\in \NN  \quad \text{for all } n\in \NN 
		 \quad \textbf{and} \quad \text{there exists a sequence } (x_n)_{n\in\NN}
		 \tag{Condition 2}
		\label{condition2}
		 \\ 
		  \text{ of positive real numbers converging to }0, \text{ such that } 
		 \sum_{k=\lceil n^{1/\rho} x_n\rceil}^n \frac{a_{k}}{a_{k+1}} = o(n^{1/\rho}) .
		\nonumber
	\end{gather}
	
	Note that \eqref{condition2} is implied by the following condition:
	\begin{gather}
		\frac{a_{n+1}}{n^{1-1/\rho}a_{n}} \mathop{\longrightarrow}^{n\to \infty} \infty \qquad \textbf{and}\qquad \frac{a_{n+1}}{a_{n}}\in \NN  \quad \text{for all } n\in \NN .
		\tag{Condition 3}
		\label{condition3}
	\end{gather}
	\begin{proof}[Proof that \eqref{condition3} implies \eqref{condition2}]
		Assume that $\frac{a_{n+1}}{n^{1-1/\rho}a_{n}} \to  \infty$. Then
		\begin{align*}
			\sum _{k=\lceil n^{1/2\rho}\rceil}^n \frac{a_{k}}{a_{k+1}} 
			& = \sum_{k=\lceil n^{1/2\rho}\rceil}^n \frac{k^{1-1/\rho}a_{k}}{a_{k+1}} \cdot\frac{1}{k^{1-1/\rho}} 
			 = o(1)  \sum_{k=\lceil n^{1/2\rho}\rceil}^n  k^{1/\rho-1}
			= o(n^{1/\rho}),
		\end{align*}
		so  \eqref{condition2} holds with $x_n=n^{-1/2\rho}$, $n\in\NN$.
	\end{proof}
	
	\begin{theorem}[Mod-Gaussian convergence for $(\frac{S_n^f}{n^{1/\rho}})_{n\in\NN}$]	\label{prop:Mod-Gaussian-conditions-1-2}
		Assume that $(a_n)_{n\in\NN}$ satisfies \eqref{condition1} or \eqref{condition2}. 
		Then 
		\[
			\exp\Bigl(-\frac{\theta^2n^{1-2/\rho}\EE X_1^2}{2}\Bigr)  \EE \exp\Bigl(\theta \frac{S_n^f}{n^{1/\rho}}\Bigr)  \mathop{\longrightarrow}^{n\to \infty}  \exp \Bigl(\frac{\theta^\rho\gamma_\rho}{\rho!}\Bigr)
		\]
		and
		\[
			\exp\Bigl(-\frac{\theta^2\sigma_n^2}{2n^{2/\rho}}\Bigr)  \EE \exp\Bigl(\theta \frac{S_n^f}{n^{1/\rho}}\Bigr)  \mathop{\longrightarrow}^{n\to \infty}  \exp \Bigl(\frac{\theta^\rho\gamma_\rho}{\rho!}\Bigr)
		\]
		locally uniformly on $\CC$, where $\sigma_n^2 =( \sigma_n^f )^2=\EE (S_n^f)^2$ and $\gamma_\rho = \gamma_\rho(X_1)$.
	\end{theorem}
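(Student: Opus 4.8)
The plan is to compare $S_n^f$ with the i.i.d.\ sum $T_n^f=\sum_{j=1}^n Y_j$ ($Y_j$ independent copies of $X_1$) \emph{multiplicatively}, i.e.\ on the scale of the logarithm of the moment generating function, which is the only way to capture the statement at points where this function is exponentially small. Write $w=\theta/n^{1/\rho}$ and $\varphi(w)=\EE e^{wX_1}$, so $\EE e^{wT_n^f}=\varphi(w)^n$. Since $c_0=0$ we have $\gamma_1(X_1)=0$, and by the choice of $\rho$ also $\gamma_3(X_1)=\dots=\gamma_{\rho-1}(X_1)=0$; as $\varphi$ is entire with $\varphi(0)=1$, its cumulant expansion gives, for small $|w|$, $n\log\varphi(w)=\tfrac12 w^2 n\,\EE X_1^2+\tfrac{\gamma_\rho}{\rho!}w^\rho n+n\,O(w^{\rho+1})=\tfrac12\theta^2n^{1-2/\rho}\EE X_1^2+\tfrac{\gamma_\rho\theta^\rho}{\rho!}+O(n^{-1/\rho})$. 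Arguing as in the last part of the proof of Theorem~\ref{thm:MDP-large-gap} (counting the solutions of $da_\ell=d'a_{\ell'}$ with $\ell\neq\ell'$ via \eqref{eq:moms-Sn}; under \eqref{condition2} combining $a_k/a_{k+1}\le\tfrac12$ on $\{1,\dots,\lceil n^{1/\rho}x_n\rceil\}$ with the summability hypothesis) one gets $\EE (S_n^f)^2=n\EE X_1^2+o(n^{1/\rho})=n\EE X_1^2+o(n^{2/\rho})$, so both normalizing exponents in the statement equal $\tfrac12 w^2 n\,\EE X_1^2+o(1)$ locally uniformly. In view of the previous display it therefore suffices to prove $\EE e^{wS_n^f}/\varphi(w)^n\to1$ locally uniformly on $\CC$.

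Peeling off one index at a time, $\EE e^{wS_j^f}=\EE\bigl(e^{wX_j}e^{wS_{j-1}^f}\bigr)=\varphi(w)\,\EE e^{wS_{j-1}^f}+R_j(w)$ with $R_j(w)=\EE\bigl(e^{wX_j}e^{wS_{j-1}^f}\bigr)-\EE e^{wX_j}\,\EE e^{wS_{j-1}^f}$; iterating, $\EE e^{wS_n^f}=\varphi(w)^n\prod_{j=2}^n(1+\delta_j)$ with $\delta_j=R_j(w)/\bigl(\varphi(w)\,\EE e^{wS_{j-1}^f}\bigr)$, so it remains to show $\sum_{j\le n}|\delta_j|\to0$ locally uniformly. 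Expanding $R_j(w)$ into the multi-index series and using \eqref{eq:expectation-solutions}, every surviving term carries a frequency vector with nonzero total frequency $D_j$ at $j$, and $D_j a_j=-\sum_{i<j}D_i a_i$ forces the total degree to be at least $\beta_j:=\lceil a_j/(D a_{j-1})\rceil$; a crude estimate then yields $|R_j(w)|\le\sum_{K\ge\beta_j\vee2}(C_1 j|w|)^K/K!$ for a constant $C_1$ depending only on $f$. On the other hand $|\varphi(w)|\ge e^{-C|w|^2}$ for small $|w|$, and by induction $\prod_{i<j}|1+\delta_i|$ stays bounded below, so $|\delta_j|\le C_2\,|R_j(w)|\,e^{Cj|w|^2}$.

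Under \eqref{condition1} one has $a_j/\bigl((j-1)^{\rho-1}a_{j-1}\bigr)\to\infty$, so $\beta_j$ grows at least like $j^{\rho-1}$ while $j|w|\le T j/n^{1/\rho}$; hence $C_1j|w|/\beta_j\to0$ uniformly, giving $|R_j(w)|\le C_3\,(C_4/n^{1/\rho})^{\beta_j}$, which is $O(n^{-2/\rho})$ for every $j$ and super-exponentially small in $j^{\rho-1}$ for larger $j$. Splitting $\sum_{j\le n}|\delta_j|$ at a threshold $J_n$ with $J_n=o(n^{2/\rho})$ but $J_n^{\rho-1}\log n$ dominating $n^{1-2/\rho}$ (a small power of $n$; such $J_n$ exists because $\tfrac{1-2/\rho}{\rho-1}<\tfrac2\rho$), the block $j\le J_n$ contributes $O(J_n n^{-2/\rho})=o(1)$ once one checks $e^{Cj|w|^2}$ is bounded there, and the block $j>J_n$ contributes at most $\exp\bigl(O(n^{1-2/\rho})-c\,J_n^{\rho-1}\log n\bigr)\to0$. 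Under \eqref{condition2} one argues similarly, splitting at $\lceil n^{1/\rho}x_n\rceil$: below the cut the integer-ratio structure makes each $R_j(w)=O(n^{-2/\rho})$ and there are only $o(n^{1/\rho})$ indices, while above the cut a nonzero $R_j(w)$ forces several of the ratios $a_{k+1}/a_k$ near $j$ to be $\le D$, so the number of such $j$ is controlled by $\sum_{k\ge\lceil n^{1/\rho}x_n\rceil}a_k/a_{k+1}=o(n^{1/\rho})$; in both cases $\sum_{j\le n}|\delta_j|\to0$ uniformly on compacts. Together with the first paragraph this proves the theorem.

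The main obstacle is estimating $\sum_{j\le n}|\delta_j|$ uniformly for complex $\theta$: unlike for real $\theta$, the factors $\varphi(w)^j$ (hence $\EE e^{wS_{j-1}^f}$) may be exponentially small, so $\delta_j$ is $R_j(w)$ amplified by the potentially huge factor $e^{Cj|w|^2}$, and only the \emph{very large} gap hypotheses make $R_j(w)$ small enough --- through $\beta_j\to\infty$ under \eqref{condition1}, respectively through the scarcity of small ratios under \eqref{condition2} --- to defeat this amplification after summation, the splitting point having to be calibrated exactly. This is also why a genuinely stronger gap assumption than in Theorem~\ref{thm:integer-ratios} is needed and why the crude cumulant bounds of Section~\ref{sect:MDP} (cf.\ Remark~\ref{rmk:no-mod-G-for-free}) do not suffice; and making the bookkeeping in case \eqref{condition2} fully rigorous, where the resonances are spread over a slowly growing initial segment instead of a bounded one, is the technically heaviest point.
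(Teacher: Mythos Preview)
Your multiplicative factorization $\EE e^{wS_n}=\varphi(w)^n\prod_j(1+\delta_j)$ is a genuinely different route from the paper's. Under \eqref{condition1} the paper removes an initial block $S_{k_0}$ with $k_0=o(n^{1/\rho})$ and shows that \emph{all} moments $\EE(S_n-S_{k_0})^m$ with $m\le M(n)\asymp n^{1-1/\rho}$ agree exactly with the i.i.d.\ moments $\EE(T_n-T_{k_0})^m$, because the Diophantine constraint $\sum d_ja_{\ell_j}=0$ admits no nontrivial solutions once every $\ell_j>k_0$; a Taylor truncation at degree $M(n)$ (Lemma~\ref{lem-exp-poly}) then transfers the mod-Gaussian convergence directly from $T_n$. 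Under \eqref{condition2} the paper uses the pathwise coupling of \cite{agkpr2020}: the integer-ratio hypothesis lets one realize i.i.d.\ $Y_j$ on the same probability space with $\|(S_n-S_{k_0})-(T_n-T_{k_0})\|_\infty\le L\sum_{k>k_0}a_k/a_{k+1}=o(n^{1/\rho})$, and again reduces to the i.i.d.\ case.

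Your argument for \eqref{condition1} is essentially sound: since $\beta_j\gg j^{\rho-1}$ eventually, $j/\beta_j$ is uniformly bounded and $C_1j|w|/\beta_j=O(n^{-1/\rho})$, so the claimed bound $|R_j(w)|\le C_3(C_4/n^{1/\rho})^{\beta_j}$ is correct. (One also picks up a factor $e^{C_1j|w|}$ from the tail of the exponential series, which forces the threshold to satisfy $J_n=o(n^{1/\rho})$ rather than $o(n^{2/\rho})$; but $(1-2/\rho)/(\rho-1)<1/\rho$, so such $J_n$ still exists.) However, your treatment of \eqref{condition2} has a real gap. The assertion that ``a nonzero $R_j(w)$ forces several of the ratios $a_{k+1}/a_k$ near $j$ to be $\le D$'' is valid only for the \emph{degree-two} contribution: $\operatorname{Cov}(X_j,X_i)\neq0$ does force $a_j/a_i\le D$, hence $q_{j-1}\le D$, and the number of such $j$ above the cut is indeed $\le D\sum_{k\ge\text{cut}}q_k^{-1}=o(n^{1/\rho})$. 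But \eqref{condition2} allows $q_{j-1}=2$ on a sparse (even unbounded) set of $j$, and for any such $j$ your own $\beta_j=\lceil q_{j-1}/D\rceil$ equals $1$, so your crude bound gives only $|R_j(w)|\le\sum_{K\ge2}(C_1j|w|)^K/K!$, which for $j$ comparable to $n$ is of order $e^{C_1Tn^{1-1/\rho}}$. Multiplied by the amplifier $e^{Cj|w|^2}\asymp e^{CT^2n^{1-2/\rho}}$, a single such term is already exponentially large; no counting of bad indices can close the estimate. The paper's coupling argument sidesteps this entirely by working pathwise, converting the summability of $a_k/a_{k+1}$ directly into a uniform $o(n^{1/\rho})$ bound on $|S_n-T_n|$ rather than trying to control individual Fourier covariances.
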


\begin{proof}[Proof in the case of \eqref{condition2}]
	Let $S_n=S_n^f$  and $T_n = \sum_{k=1}^n Y_k$, where $Y_1, Y_2,\ldots$ are i.i.d.~copies of $X_1$.
	It follows from Theorem~\ref{thm:MDP-large-gap} that $\sigma_n^2=\EE S_n^2=n\EE X_1 +O(1)$ and so it suffices to prove the first asserted convergence.
	Fix an arbitrary $R>0$. We shall prove the uniform convergence on the disk $D_R\coloneqq \{\theta \in \CC\colon |\theta|\le R\}$.
	
	Let $(x_n)_{n\in\NN}$ be a sequence of positive real numbers satisfying \eqref{condition2}, and let $k_0=k_0(n)\coloneqq \lceil n^{1/\rho} x_n\rceil$.   
	Since the trigonometric polynomial $f$ is bounded, there exists a real number $A>0$ such that $|X_k|\le A$  a.s., so
	\begin{equation}
		|n^{-1/\rho} S_{k_0}|	= \Bigl| n^{-1/\rho} \sum_{k=1}^{k_0} X_k\Bigr| \le n^{-1/\rho} k_0 A  \le (x_n+n^{-1/\rho} )A= o(1) \qquad \text{a.s.}
		\label{eq:first-k0-steps1}
	\end{equation}
	Thus, we  only need to investigate the asymptotic behavior of 
	\[
	\exp\Bigl(-\frac{\theta^2n \EE X_1^2}{2n^{2/\rho}}\Bigr) \EE \exp\Bigl(\theta \frac{S_n-S_{k_0}}{n^{1/\rho}}\Bigr).
	\]
	In order to do that, we repeat (with  minor changes only) the proof from \cite[Section~3.1]{agkpr2020} of \cite[Theorem A]{agkpr2020}   in a simple special case.  The first change appears in the first display after \cite[inequality~(3.1)]{agkpr2020}: since $x\mapsto f(a_kx)$ has Lipschitz constant bounded by $2\pi a_k \sum_{d=-D}^D |c_d|\eqqcolon a_kL$,  \eqref{condition2} implies that
	\begin{equation}
		\|(S_n -S_{k_0} )-(T_n-T_{k_0})\|_{\infty} \le L \sum_{k=\lceil n^{1/\rho} x_n\rceil+1}^n \frac{a_{k}}{a_{k+1}} = o(n^{1/\rho}) \qquad \text{a.s.}
		\label{eq:dif-sup-ST}
	\end{equation}
Moreover, since $T_n-T_{k_0} = Y_{k_0+1}+\cdots +Y_n$ is a sum of i.i.d.~random variables with the same distribution as $X_1$, we have
	\begin{equation} \label{eq:T_n-independent}
		\EE  \exp\Bigl(\theta \frac{T_n-T_{k_0}}{n^{1/\rho}}\Bigr) =  \bigl(\EE \exp(\theta n^{-1/\rho} X_1) \bigr)^{n-k_0}. 
	\end{equation}
Recall that $\gamma_1(X_1)=\EE X_1=c_0=0$. Since $X_1$ is bounded a.s., $\EE e^{tX_1}<\infty$ for every $t>0$. Therefore, $u\mapsto \EE e^{uX_1}$ is entire and does not vanish in a neighbourhood of $0$, so $u\mapsto \log \EE e^{uX_1}$ is analytic in a neighbourhood of $0$ and
	\begin{equation}\label{eq:taylor-cumulants}
		\log \EE e^{uX_1} 
		= \sum_{m=1}^\infty \frac{u^m \gamma_m(X_1)}{m!} 
		= \frac{u^2\gamma_2(X_1)}{2} + \frac{u^\rho\gamma_\rho(X_1)}{\rho!} +\sum_{m=\rho+1}^\infty  \frac{u^m \gamma_m(X_1)}{m!} .
	\end{equation}
Since $|X_1|\le A$ a.s. and $\gamma_m(X_1)/m!$ are the coefficients in the analytic expansion of $\log  \EE e^{uX}$, we have (see, e.g., \cite[equation (1.34)]{SS-book})
	\begin{align*}
		|\gamma_m(X_1) |
		& =\biggl| \sum_{k=1}^m \frac{(-1)^{k+1}}k \sum_{\substack{m_1+\cdots+m_k=m \\ m_1,\ldots,m_k\ge 1}} \frac{m!}{m_1!\cdots m_k!} \EE X_1^{m_1} \cdots \EE X_k^{m_k}\biggr| 
		\\& \le 
		A^m  \sum_{k=1}^m \frac{1}k \sum_{\substack{m_1+\cdots+m_k=m \\ m_1,\ldots,m_k\ge 0}} \frac{m!}{m_1!\cdots m_k!}		
		\\ &  = A^m \sum_{k=1}^m k^{m-1} \le A^mm^m \le (Ae)^m m!.
	\end{align*}
Thus, for every $\eps<1$, we have, uniformly on $|u|\le \eps(eA)^{-1}$, 
	\[
	\biggl| \sum_{m=\rho+1}^\infty  \frac{u^m \gamma_m(X_1)}{m!} \biggr| 
	\le  \sum_{m=\rho+1}^\infty \frac{|u|^m |\gamma_m(X_1)|}{m!}
	\le
	\frac{ \eps^{\rho+1}}{1-\eps}.
	\]
	This inequality applied with $u=\theta n^{-1/\rho}$, $\eps = n^{-2/(2\rho+1)}$ (so that $|u|\le \eps(eA)^{-1}$  for sufficiently large $n\in\NN$) and equation \eqref{eq:taylor-cumulants} yield
	\[
		\log \EE e^{\theta X_1/n^{1/\rho}} 
		= \frac{\theta^2\gamma_2(X_1)}{2n^{2/\rho}} + \frac{\theta^\rho\gamma_\rho(X_1)}{n\rho!} + o(1/n)
	\]
	uniformly on the disk $D_R$.
	
		Hence, \eqref{eq:T_n-independent} yields
	\begin{equation}
		\EE  \exp\Bigl(\theta \frac{T_n-T_{k_0}}{n^{1/\rho}}\Bigr) = \exp \Bigl(\frac{\theta^2 n\EE X_1}{2n^{2/\rho}}  + \frac{\theta^\rho \gamma_\rho}{\rho!}  \Bigr) \bigl(1+o(1)\bigr)
		\label{eq:LHS-for-indep-sums}
	\end{equation}
	uniformly on $D_R$.
	This  together with estimate \eqref{eq:dif-sup-ST} imply that
	\begin{align*}
		\exp\Bigl(&-\frac{\theta^2n\EE X_1}{2n^{2/\rho}}\Bigr)  \EE \exp\Bigl(\theta \frac{S_n-S_{k_0}}{n^{1/\rho}}\Bigr) 
		\\ &= \exp\Bigl(-\frac{\theta^2n\EE X_1}{2n^{2/\rho}}\Bigr)  \EE \exp\Bigl(\theta \frac{T_n-T_{k_0}}{n^{1/\rho}}\Bigr) \exp\Bigl(\theta \frac{(S_n -S_{k_0} )-(T_n-T_{k_0})}{n^{1/\rho}}\Bigr) 
		\\ &=  \exp\Bigl(\frac{\theta^\rho \gamma_\rho}{\rho!} \Bigr) \bigl(1+o(1)\bigr)
	\end{align*}
	uniformly on $D_R$. 
	This and \eqref{eq:first-k0-steps1} yield the assertion.
\end{proof}

	Before we move to the proof of Proposition~\ref{prop:Mod-Gaussian-conditions-1-2} in the case of \eqref{condition1}, we give a short proof to the following lemma.
	\begin{lemma}	\label{lem-exp-poly}
		Let $A\ge 1$. Then, for every $x\in \CC$ with $|x|\le A$, 
		\begin{equation*}
			\Bigl|e^x - \sum_{m=0}^{\lceil 2eA \rceil} \frac{x^m}{m!}\Bigr|  \le 2^{-A}.
		\end{equation*}
	\end{lemma}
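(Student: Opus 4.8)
The plan is to estimate the tail of the exponential series directly. Set $N \coloneqq \lceil 2eA\rceil$, so that $N \ge 2eA$, and observe that
\[
e^x - \sum_{m=0}^{N} \frac{x^m}{m!} = \sum_{m=N+1}^{\infty} \frac{x^m}{m!},
\]
so by the triangle inequality and the hypothesis $|x|\le A$ it is enough to bound $\sum_{m=N+1}^{\infty} \frac{A^m}{m!}$ by $2^{-A}$.

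First I would record the elementary lower bound $m! \ge (m/e)^m$, valid for all integers $m\ge 0$; it follows at once from $e^m = \sum_{k\ge 0} m^k/k! \ge m^m/m!$. Hence for every $m\ge 1$ we have $\frac{A^m}{m!} \le \bigl(\tfrac{eA}{m}\bigr)^m$. For the only indices that occur in the tail, namely $m \ge N+1 > 2eA$, this gives $\tfrac{eA}{m} < \tfrac12$, so each such term is at most $2^{-m}$. Summing the resulting geometric series,
\[
\Bigl| e^x - \sum_{m=0}^{N} \frac{x^m}{m!}\Bigr| \le \sum_{m=N+1}^{\infty} 2^{-m} = 2^{-N} \le 2^{-2eA} \le 2^{-A},
\]
where the last step uses $2e\ge 1$, hence $2eA \ge A$. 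This is exactly the claimed estimate.

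There is essentially no obstacle in this argument; it is a routine Stirling-type tail bound. The only points that warrant a moment's care are that the cutoff $\lceil 2eA\rceil$ is taken \emph{after} the factor $2e$ (which is precisely what makes $eA/m<\tfrac12$ hold for all $m>N$) and that the hypothesis $A\ge1$ is not actually needed for the displayed chain of inequalities — it merely ensures $2^{-A}\le\tfrac12$, which is how the lemma will be applied later.
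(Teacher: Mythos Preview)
Your proof is correct and follows essentially the same approach as the paper: both bound the tail of the exponential series using the elementary Stirling estimate $m!\ge(m/e)^m$. The only cosmetic difference is that the paper factors out the first tail term $\frac{A^{N+1}}{(N+1)!}$ and bounds the remainder by $e^A$, whereas you bound each term by $2^{-m}$ and sum the geometric series; your bookkeeping is arguably cleaner and, as you note, does not require $A\ge1$.
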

	\begin{proof}
		Since 
		\[
			\Bigl|e^x - \sum_{m=0}^{\lceil 2eA \rceil} \frac{x^m}{m!}\Bigr| 
			\le \sum_{\lceil 2eA \rceil +1}^\infty \frac{|x|^m}{m!}
			\le \frac{A^{\lceil 2eA \rceil +1}}{(\lceil 2eA \rceil +1)!} \sum_{m=0}^\infty  \frac{|x|^m}{m!} 
			\le \frac{A^{\lceil 2eA \rceil +1} e^{A}}{(\lceil 2eA \rceil +1)!},
		\]
		the assertion follows from Stirling's formula.
	\end{proof}
		
\begin{proof}[Proof of Theorem~\ref{prop:Mod-Gaussian-conditions-1-2} in the case of \eqref{condition1}]
Let us denote $S_n=S_n^f$  and $T_n = \sum_{k=1}^n Y_k$, where $Y_1, Y_2,\ldots$ are i.i.d.~copies of the random variable $X_1$.
It follows from Theorem~\ref{thm:MDP-large-gap} that $\sigma_n^2=\EE S_n=n\EE X_1 +O(1)$ and thus it suffices to prove the first asserted convergence.
	Fix an arbitrary $R>1$. We shall prove the uniform convergence on the disk $D_R\coloneqq \{\theta \in \CC\colon |\theta|\le R\}$.
	
	Let $(y_n)_{n\in\NN}$ be a sequence  of positive numbers converging to $0$, and let $k_0=k_0(n,y_n)\coloneqq \lceil n^{1/\rho} y_n\rceil$. 	
	Since $f$ is bounded, there exists a real number $B>0$ such that $|X_k|\le B$  a.s. Therefore,
	\begin{equation}
		|n^{-1/\rho} S_{k_0}|	= \Bigl| n^{-1/\rho} \sum_{k=1}^{k_0} X_k\Bigr| \le Bn^{-1/\rho} k_0 \le B( y_n +n^{-1/\rho})= o(1) \qquad \text{a.s.}
		\label{eq:first-k0-steps2}
	\end{equation}

	 Let $M=M(n)=\lceil 2en^{1-1/\rho}R \rceil$. Note that  $|\theta \frac{S_n-S_{k_0}}{n^{1/\rho}} |\le BRn^{1-1/\rho}$ a.s. Thus, Lemma~\ref{lem-exp-poly} (applied with $A=BRn^{1-1/\rho}$) implies that
	\begin{equation}
		\EE \exp\Bigl(\theta \frac{S_n-S_{k_0}}{n^{1/\rho}}\Bigr) = \sum_{m=0}^{M} \frac{\theta^m\EE(S_n-S_{k_0})^m}{m! n^{m/\rho}} +r_n(\theta),
		\label{eq:exp-to-poly}
	\end{equation}
	where a random variable $r_n(\theta)$ almost surely satisfies $|r_n(\theta)|\le e^{-BT n^{1-1/\rho}\log 2 }$ uniformly on $D_R$.
	
	We shall find a sequence $(y_n)_{n\in\NN}$ of positive numbers converging to $0$ such that for sufficiently large $n\in\NN$ the following holds:
	\begin{equation}
		\EE(S_n-S_{k_0})^m=\EE(T_n-T_{k_0})^m \quad \text{for every integer } m\le 2eBRn^{1-1/\rho}+1.
		\label{eq:aim-go-to-indep}
	\end{equation}
	Having such a sequence we obtain by equation \eqref{eq:exp-to-poly} and   Lemma~\ref{lem-exp-poly} that
	\[
		\exp\Bigl(-\frac{\theta^2 n\EE X_1^2}{2n^{2/\rho}}\Bigr)  \EE \exp\Bigl(\theta \frac{S_n-S_{k_0}}{n^{1/\rho}}\Bigr) 
		= \exp\Bigl(-\frac{\theta^2 n\EE X_1^2}{2n^{2/\rho}}\Bigr)  \EE \exp\Bigl(\theta \frac{T_n-T_{k_0}}{n^{1/\rho}}\Bigr)+ o(1)
	\]
	uniformly on $D_R$.
	 Hence, equation \eqref{eq:LHS-for-indep-sums} implies 
	 	 \[
	 \exp\Bigl(-\frac{\theta^2 n\EE X_1^2}{2n^{2/\rho}}\Bigr)  \EE \exp\Bigl(\theta \frac{S_n-S_{k_0}}{n^{1/\rho}}\Bigr) = \exp\Bigl(\frac{\theta^\rho \gamma_\rho}{\rho!}\Bigr) \bigl(1+ o(1)\bigr)
	 \]
	 uniformly on $D_R$,
	 which together with \eqref{eq:first-k0-steps2} yields the assertion. Therefore, we only need to find the sequence $(y_n)_{n\in\NN}$ of positive numbers, converging to $0$, and satisfying property \eqref{eq:aim-go-to-indep} (recall that the dependence on $(y_n)_{n\in\NN}$ is hidden in $k_0=k_0(n,y_n)$).
	 
	It follows from formula \eqref{eq:moms-Sn} that 
	 \[
		\EE(S_n-S_{k_0})^m = \sum_{\substack{\ell_1,\ldots, \ell_m \in \{k_0+1,\ldots ,n\} \\ d_1,\ldots, d_m \in \{-D,\ldots,D\} }} c_{d_1}\cdots c_{d_m} \ind_{\{ \sum_{j=1}^m d_j a_{\ell_j} = 0\}}.
	 \]
	We call a solution to the equation  $\sum_{j=1}^m d_j a_{\ell_j}=0$ (with unknown $d_1, \ldots , d_m \in \{-D,D\}$ and $\ell_{1},\ldots, \ell_m \in \{k_0+1,\ldots , n\}$) \textit{trivial}, if for every $k\in \{k_0+1,\ldots , n\}$, we have $\sum_{j=1}^m d_j \ind_{\{\ell_j=k\}} = 0$. Formula \eqref{eq:moms-Sn-indep}  implies that 
	\[
		\sum_{\substack{\ell_1,\ldots, \ell_m \in \{k_0+1,\ldots ,n\} \\ d_1,\ldots, d_m \in \{-D,\ldots,D\} \\ \text{trivial} }} c_{d_1}\cdots c_{d_m} = \EE(T_n-T_{k_0})^m ,
	 \]
	 where the sum runs over all trivial solutions to the equation $\sum_{j=1}^m d_j a_{\ell_j}=0$. This means that \eqref{eq:aim-go-to-indep} is satisfied  if there are no nontrivial solutions to the equation $\sum_{j=1}^m d_j a_{\ell_j}=0$ with $d_1, \ldots , d_m \in \{-D,D\}$ and $\ell_{1},\ldots, \ell_m \in \{k_0+1,\ldots , n\}$.
	 
	 Since $\frac{a_{n+1}}{n^{\rho-1} a_{n}}\to \infty$, there exists  a decreasing sequence $(x_n)_{n\in\NN}$ of positive numbers converging to $0$, such that $\frac{a_{n+1}}{n^{\rho-1}a_{n}} x_n \to \infty$ and $n^{\rho-1}x_n \to \infty$. Let $y_n=x_{\lceil n^{1/2\rho} \rceil}^{1/(\rho-1)}$. 
	 
	 Assume contrarily, that for an arbitrarily large $n\in\NN$ there exists a nontrivial solution to the equation $\sum_{j=1}^m d_j a_{\ell_j}=0$ with $d_1, \ldots , d_m \in \{-D,D\}$, $\ell_{1},\ldots, \ell_m \in \{k_0+~1,\ldots , n\}$, and $m\le 2eBRn^{1-1/\rho}+1$. Let $k_1$ be the largest number $k\in \{k_0+1,\ldots , n\}$ for which $\sum_{j=1}^m d_j \ind_{\{\ell_j=k\}} \neq 0$ (such a number exists, since our solution is not trivial). Since $k_1$ is maximal, we know that
	 \begin{align*}
	 	0  = \Bigl| \sum_{j=1}^m d_j a_{\ell_j}  \Bigr|
		& =	\Bigl| \sum_{k=k_0+1}^n a_k\sum _{j=1}^m d_j \ind_{\{\ell_j=k\}}  \Bigr|
		\\
		& = 	\Bigl| \sum_{k=k_0+1}^{k_1-1} a_k\sum _{j=1}^m d_j \ind_{\{\ell_j=k\}} + a_{k_1} \sum_{j=1}^m d_j \ind_{\{\ell_j=k_1\}}  \Bigr|
		\\ & 
		\ge a_{k_1} \Bigl| \sum_{j=1}^m d_j \ind_{\{\ell_j=k_1\}}  \Bigr| - \Bigl| \sum_{k=k_0+1}^{k_1-1} a_k\sum _{j=1}^m d_j \ind_{\{\ell_j=k\}} \Bigr|
		\\
		& \ge a_{k_1} -  \sum_{k=k_0+1}^{k_1-1} \sum _{j=1}^m |d_j|  \ind_{\{\ell_j=k\}}  \max_{k_0\le k\le k_1-1}a_{k}
		\\
		& \ge  a_{k_1} - mDa_{k_1-1} 
		\ge  a_{k_1} - (2en^{1-1/\rho}BR+1)D a_{k_1-1}.
	 \end{align*}
	 Therefore, in order to get a contradiction it suffices to prove that for all sufficiently large integers $n$, 
	 \begin{equation}
	 	a_{k+1} > (2en^{1-1/\rho}BR+1)D a_{k} \quad \text{for every } k\in  \{\lceil n^{1/\rho}y_n \rceil ,\ldots , n\}.
	\label{eq:aim-big-gaps}
	\end{equation}
	 
	Recall that $n^{\rho-1}x_n \to \infty$, so $n^{1/{2\rho}}y_{n} \to \infty$. In particular, for $n\in\NN$ large enough $n^{1/\rho}y_n\ge n^{1/2\rho}$. 
	Thus, $k \ge n^{1/\rho}y_n \to \infty$, so $\frac{k^{\rho-1} a_k }{a_{k+1}x_k} =o(1)$. 
	Moreover, since $y_n^{\rho-1}=x_{\lceil n^{1/2\rho}\rceil}$,  and $(x_n)_{n\in\NN}$ is decreasing, we have for every $k\ge n^{1/\rho} y_n$ and sufficiently large $n\in\NN$,
	 \begin{align*}
	 	\frac{n^{1-1/\rho} a_k}{a_{k+1}} 
		& = \frac{k^{\rho-1} a_k }{a_{k+1}x_k} \cdot \frac{n^{1-1/\rho}}{k^{\rho-1}} \cdot x_k 
		= o(1) \cdot \frac{1}{y_n^{\rho-1}} x_k 
		\\
		&=  o(1) \cdot \frac{x_k}{x_{\lceil n^{1/2\rho}\rceil}} 
		\le  o(1) \cdot \frac{x_{\lceil n^{1/\rho}y_n\rceil }}{x_{\lceil n^{1/2\rho} \rceil}}
		 \le o(1) \cdot \frac{x_{\lceil n^{ 1/2\rho}\rceil }}{x_{\lceil n^{1/2\rho}\rceil}} = o(1).
	 \end{align*}
	 This confirms \eqref{eq:aim-big-gaps} and thus completes the proof.
\end{proof}

\subsection*{Acknowledgement} JP is supported by the Austrian Science Fund (FWF) under project P32405 and by the German Research Foundation (DFG) under project 516672205. MS was supported by the Austrian Science Fund (FWF) under project P32405 and by the  National Science Center, Poland, via the Maestro grant no. 2015/18/A/ST1/00553.

  \bibliographystyle{amsplain}
  \bibliography{lacunary_sums}

\end{document}